\documentclass[11pt, english, draft]{article}
\usepackage{amssymb,amsmath,amsthm}
\usepackage{graphicx,color}
\setlength{\topmargin}{0 mm} \setlength{\oddsidemargin}{0 mm}
\textwidth=150mm \textheight=210mm
\title{{\bf Groups that have a Partition by Commuting 
Subsets}}
\author{{\bf T. Foguel}, {\bf J. Hiller}, {\bf Mark L. Lewis} and {\bf A. R. Moghaddamfar}\\[0.3cm]
{\em In memory of Professor Carlo Casolo.}}

\newtheorem{theorem}{Theorem}[section]

\newtheorem{corollary}[theorem]{Corollary}

\newtheorem{proposition}[theorem]{Proposition}
\newtheorem{remark}[theorem]{Remark}
\newtheorem{lm}[theorem]{Lemma}

\newtheorem*{coj*}{Conjecture}
\begin{document}
\newcommand{\f}{\frac}
\newcommand{\sta}{\stackrel}
\maketitle
\begin{abstract}
\noindent  Let $G$ be a nonabelian group.
We say that $G$ has an abelian partition, 
if there exists a partition of $G$ into commuting 
subsets $A_1,  A_2,  \ldots, A_n$ of $G$, 
such that $|A_i|\geqslant 2$ for each $i=1, 2, \ldots, n$. 
This paper investigates problems relating to group
with abelian partitions. Among other results, we show that every finite
group is isomorphic to a subgroup of a group with an abelian partition and
also isomorphic to a subgroup of a group with no abelian partition. We also
find bounds for the minimum number of partitions for several families of
groups which admit abelian partitions -- with exact calculations in some
cases.  Finally, we examine how the size of partitions with the minimum number of  parts behaves with
respect to the direct product.
\end{abstract}

{\em Keywords}: abelian partition, commuting graph.

\renewcommand{\baselinestretch}{1}
\def\thefootnote{ \ }
\footnotetext{{\em $2010$ Mathematics Subject Classification}:
Primary: 20D06, Secondary: 05C25.}

\section{Introduction}

In this paper, all groups and graphs are finite and we will consider only simple graphs.  One way to study a group is to attach a graph to the group and use properties of the graph to understand the group. There are a number of different graphs that have been studied in relationship to groups.  We focus on the commuting graph of a group.  Suppose $G$ is a finite group and $X$ is a nonempty  subset of $G$. The commuting graph on the set $X$, which  we denote by ${\cal C}(G,X)$, has $X$ as its vertex set with  $g, h\in X$ joined by an edge whenever they commute as elements  of $G$, i.e.,  $gh=hg$.  We put  $\Delta(G)={\cal C}(G,G)$  and call it the {\em commuting graph of} $G$. We will review the relevant definitions in the text. 

We are especially interested in studying commuting graphs that can be partitioned into disjoint cliques each of which has size at least $2$.  Note that for the commuting graph, a clique in the graph corresponds to a subset of the group which consists of commuting elements.  Given a group $G$, a set $A \subseteq G$ is {\em commuting} if all pairs of elements in the set commute. 

A group $G$ is called an {\em abelian partitionable group} (or an {\em AP-group}) if it can be partitioned into commuting subsets 
$A_1, A_2, \ldots, A_n$ (for some $n$), such that $|A_i|\geqslant 2$ for each $i$. In this situation, $A_1\uplus A_2\uplus \cdots\uplus A_n$ is called an ($n$-){\em abelian partition} of $G$. For notational simplicity, we will always assume that $1\in A_1$.  If $G$ is not an AP-group we say that $G$ is a {\em NAP-group}.  

As far as we can tell, AP-groups were first introduced in \cite{MMS} where the authors classified the AP-groups that have an abelian partition with $n = 2$ and with $n = 3$.  This came out of looking at commuting graphs that satisfied certain numerical conditions that were studied in \cite{MM}.  The study of these groups is also related to the study of groups where the commuting graph and related graphs are split graphs which have been studied in \cite{AM} and \cite{LLMM}.

 In \cite{FH}, it is shown that the dihedral groups of order $2n$, with $n\geqslant 3$ odd, are NAP-groups and that certain direct products of dihedral groups are NAP-groups.  In our paper, we will show that both the class of AP-groups and the class of NAP-groups are ``large'' in the sense that every group is isomorphic to a subgroup of some group in the class. 

\medskip
\noindent{\bf Theorem A.} {\em Let $G$ be a finite group.  Then the following conditions hold:
\begin{itemize}
\item[{\rm (1)}] There is an AP-group $H$ so that $G$ is isomorphic to a subgroup of $H$.
\item[{\rm (2)}]  There is a NAP-group $K$ so that $G$ is isomorphic to a subgroup of $K$.
\end{itemize}}

For the remainder of the paper, we focus on AP-groups.  It is natural to ask if simple groups are necessarily AP-groups.  At this time, we have only been able to show this for $L_2 (q)$ where $q$ is a prime power and ${\rm Sz} (q)$ where $q = 2^{2n+1}$ for a positive integer $n$. 
 
\medskip
\noindent {\bf Theorem B.}
{\em Suppose $G$ is either $L_2 (q)$ where $q$ is a prime power or ${\rm Sz} (q)$ where $q = 2^{2n+1}$ for a positive integer $n$. Then $G$ is an AP-group.}
\medskip

\indent A {\em minimal abelian partition} is one of the least cardinality, and the size of a minimal abelian partition of $G$ is denoted by $\vartheta_a(G)$ (and for convenience we shall write $\vartheta_a(G)=0$ if $G$ is a NAP-group).   We will refer to this number as the {\em minimal AP-degree} of $G$.  We work to obtain several bounds on this degree.  In this direction, the first result we obtain is the following theorem:

\medskip
\noindent {\bf Theorem C.} {\em 
Let $G$ be a nonabelian group, and let $A$ be an abelian subgroup of maximal order in $G$.  Then $\vartheta_a(G) \leqslant [G:Z(G)]-[A:Z(G)]+1$.} 
\medskip

\indent We will also find some examples of groups where this bound is met.  For our next bound, we use $c(G)$ to denote the number of conjugacy classes of $G$.  We then have the bound:

\medskip
\noindent {\bf Theorem D.}  {\em 
Let $G$ be an AP-group.  Then $\vartheta_a(G)\geqslant \lceil |G|/c(G)\rceil$.} 
\medskip

\indent A set $B\subset G$ is {\em noncommuting} if no pair of distinct elements in the set commute. The maximum size of  noncommuting subsets of $G$ is denoted by $n(G)$.  It is actually quite easy to see that $\vartheta_a (G) \geqslant n (G)$.  We find a couple of families of groups where we have the equality $\vartheta_a (G)=n(G)$.

We close this introduction by noting that the idea of looking at partitions of groups is one that has a long history.  Baer, Kegel, and Suzuki classified the finite groups that are partitioned by subgroups (see \cite{Baer1}, \cite{Baer2}, \cite{Baer3}, \cite{kegel} and \cite{Suzuki3}).  Also, Zappa has written an interesting expository article on groups having a partition of subgroups in \cite{zappa}.  While in general, our abelian partition need not consist of subgroups, we will see in some important cases, that an abelian partition will be given by a partition of subgroups.

\section{Graph theory and AP-groups}

All graphs considered here are finite, simple and undirected. 
Let $\Gamma=(V_\Gamma, E_\Gamma)$ be a graph. 
In the case when $E_\Gamma=\emptyset$, we say that 
$\Gamma$ is an {\em edgeless graph} (or {\em  empty graph}). 
A {\em complete set} (or a {\em clique}) in $\Gamma$ is a subset of 
$V_\Gamma$ consists of pairwise adjacent vertices. Similarly, 
an
{\em independent set} in $\Gamma$ is a set 
of pairwise nonadjacent vertices. 

A graph $\Gamma$ is called a {\em split graph} if 
$V_\Gamma$ can be partitioned into an independent set  $I$ and a 
complete set $C$. In this case, $V_\Gamma=I\uplus C$ is called 
a {\em split partition} of $\Gamma$.  Split graphs introduced by F$\rm \ddot{o}$ldes and 
Hammer in \cite{Foldes-Hammer}.  

A generalization of split graphs was introduced 
and investigated under the name $(m, n)$-graphs in \cite{Brandstadt}. 
 A graph $\Gamma$ is a $(m, n)$-graph if its vertex set can be partitioned into 
 $m$  independent sets $I_1, \ldots, I_m$ and $n$ complete sets $C_1, \ldots, C_n$.  
 In this situation,  the partition
 $$V_\Gamma=I_1\uplus I_2\uplus\cdots\uplus I_m\uplus  C_1\uplus C_2\uplus\cdots\uplus C_n,$$
 is called an $(m, n)$-split partition of $\Gamma$. 
 Thus, $(m, n)$-graphs are a natural generalization of split graphs, 
 which are precisely $(1,1)$-graphs.    
 We also note that a $(0,1)$-graph is a complete graph, while a
 $(1,0)$-graph is an edgeless graph.    
 
We now return to the commuting graphs associated with finite groups. 
We denote by $\nabla(G)$ the complement graph of  $\Delta(G)$,  and call it the {\em noncommuting graph of} $G$.
As mentioned in the Introduction, we 
 are interested in studying  those finite groups $G$ for 
 which the commuting graph $\Delta(G)$ is an $(0, n)$-graph 
 with a $(0, n)$-split partition
 $$G=C_1\uplus C_2\uplus\cdots\uplus C_n,$$
 where $|C_i|\geqslant 2$ for each $i$, or equivalently, 
the noncommuting graph $\nabla(G)$ is an $(n, 0)$-graph 
 with a $(n, 0)$-split partition
 $$G=I_1\uplus I_2\uplus\cdots\uplus I_n,$$
 where $|I_i|\geqslant 2$ for each $i$. Clearly, $\Delta(G)$ is an $(0, 1)$-graph
 if and only if $G$ is a nontrivial abelian group. 

 
\section{Groups with abelian partitions}

In this section, we first investigate the existence problem of abelian partitions for finite groups.  Clearly, by the definition, all nontrivial abelian groups are AP-groups,  hence our focus will be on nonabelian groups.  More precisely,  we will show in the next section that $\vartheta_a(G)=1$ if and only if $G$ is abelian. Thus, throughout this section, whenever we refer to a group it should be assumed that it is both {\em nonabelian} 
and {\em finite}, unless otherwise specified.  

It is known that if $A_1\uplus A_2\uplus \cdots\uplus A_n$ is an $n$-abelian partition of a (nonabelian) group $G$, then $n\geqslant 3$ (see \cite[Lemma 1.2]{MMS}).  Recall that we are assuming that each part in our partition has at least two elements, so we have $|G|\geqslant 2n$.  We will see that the nonabelian group of order $6$ is not an AP-group; so we have  $|G|\geqslant 8$.

A group $G$ with {\em nontrivial center} will always be an AP-group; indeed, the cosets of $Z(G)$ are  commuting sets and they partition $G$. 
Additionally, a group $G$ of {\em odd order} will also be an AP-group.  To see this,  it is enough to consider the commuting sets $\{x, x^{-1}\}$ for any nontrivial element $x\in G$ and add the identity element to one such set. Collectively, the preceding discussion shows that the problem of finding an abelian partition for a group essentially reduce to {\em centerless groups of even orders}.  A group $G$ is called centerless if $Z(G)$ is the identity subgroup of $G$.

 We discuss in the sequel some elementary results on $\vartheta_a (G)$ when $G$ is an AP-group. 
Let $G$ be an AP-group with $\vartheta_a(G)=m\geqslant 1$, and we fix for $G$ a minimal abelian partition:
$$G=A_1\uplus A_2\uplus \cdots\uplus A_m.$$ 
An observation that is often useful when working with AP-groups $G$ is that if $A\subseteq G$ is commuting, we have $$A\subseteq \bigcap_{a\in A} C_G(a).$$
Moreover, we can choose an element $a_i\in A_i$ such that $C_G(a_i)$  has minimum possible order; we see that 
$$G=\bigcup_{i=1}^m C_G(a_i),$$
is a covering of $G$ by $m$ centralizers, and we obtain $|G|\leqslant \sum_{i=1}^m |C_G(a_i)|$.  We now obtain another covering of $G$ by centralizers when we have commuting set whose size is strictly larger than the size of a minimal abelian partition for the group. 

\begin{lm}\label{union} 
If $\{a_1, a_2, \ldots, a_k\}$ is a commuting set of a group $G$ with $k>  \vartheta_a(G)$, then we have 
$$G= \bigcup_{1\leqslant i<j\leqslant k}C_G(a_{i}^{-1} a_j).$$
\end{lm}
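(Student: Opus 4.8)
The plan is to fix an arbitrary element $g \in G$ and show that it lies in $C_G(a_i^{-1}a_j)$ for some pair $i < j$; the reverse inclusion is immediate, since each centralizer is a subset of $G$. The engine of the argument is the hypothesis $k > \vartheta_a(G)$ combined with a fixed minimal abelian partition $G = A_1 \uplus A_2 \uplus \cdots \uplus A_m$, where $m = \vartheta_a(G) < k$.

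First I would apply the pigeonhole principle not to $g$ itself, nor to the $a_i$, but to the $k$ translates $a_1 g, a_2 g, \ldots, a_k g$. Since these $k$ elements are distributed among only $m < k$ parts, two of them, say $a_i g$ and $a_j g$ with $i \neq j$, must fall into the same part $A_t$. Because $A_t$ is a commuting set, these two elements commute with each other.

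Next I would unwind this commuting relation. From $(a_i g)(a_j g) = (a_j g)(a_i g)$ one cancels a $g$ on the right to obtain $a_i g a_j = a_j g a_i$, and then --- crucially using that $a_i$ and $a_j$ themselves commute, so that $a_i a_j^{-1} = a_j^{-1} a_i$ --- one rearranges this to $(a_j^{-1}a_i) g = g (a_j^{-1}a_i)$. This says precisely that $a_j^{-1}a_i \in C_G(g)$, equivalently $g \in C_G(a_j^{-1}a_i) = C_G(a_i^{-1}a_j)$. Relabelling so that $i < j$ places $g$ in the asserted union, and since $g$ was arbitrary, the union exhausts $G$.

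The computation in the final step is routine; the one genuinely nonobvious move --- and the step I expect to be the crux --- is the decision to feed the translated set $\{a_1 g, \ldots, a_k g\}$ into the pigeonhole argument rather than the commuting set $\{a_1,\ldots,a_k\}$ or the conjugates $\{a_i g a_i^{-1}\}$. This is what simultaneously activates both available structural facts (that each part of the minimal partition is a commuting set, and that the $a_i$ commute with one another) and converts a coincidence of parts into the centralizer membership we need.
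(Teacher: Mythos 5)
Your proposal is correct and is essentially the paper's own argument in contrapositive form: the paper assumes $g$ lies outside the union, shows that $\{ga_1, ga_2, \ldots, ga_k\}$ is then a noncommuting set of size $k$, and derives the contradiction $k \leqslant n(G) \leqslant \vartheta_a(G)$ --- where the inequality $n(G) \leqslant \vartheta_a(G)$ is exactly your pigeonhole step packaged as a standing fact. The crux you identified (translating the commuting set by $g$, left or right being immaterial, and unwinding the commutation of two translates via the commutativity of $a_i$ and $a_j$ to obtain $g \in C_G(a_i^{-1}a_j)$) is precisely the paper's computation.
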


\begin{proof}  
Assume the contrary and choose $g\in G\setminus \bigcup_{1\leqslant i<j\leqslant k}C_G(a_{i}^{-1} a_j)$. 
We claim that  $\{ga_1, ga_2, \ldots, ga_k\}$ is a noncommuting set of $G$. If not, then there is a pair $i<j$ such that
$ga_jga_i=ga_iga_j$, or equivalently, $(a_i^{-1}a_j)g=g(a_ja_i^{-1})=g(a_i^{-1}a_j)$,  which shows that $g\in C_G(a_i^{-1}a_j)$, a contradiction. We now have $k\leqslant n(G)\leqslant \vartheta_a(G)$. This contradiction completes the proof.
\end{proof} 

Referring to the already mentioned fact that any group $G$ with nontrivial center $Z(G)$ can be partitioned into the cosets of $Z(G)$ as 
commuting subsets of $G$, we see that $\vartheta_a(G)\leqslant [G:Z(G)]$. However, Theorem C gives an improved upper bound on $\vartheta_a(G)$, which we prove now. 


\begin{proof}[Proof of Theorem C] 
Let $A$ be an abelian subgroup of $G$ of maximal order. It is
obvious that $Z(G)\leqslant A$. Suppose that $[A:Z(G)]=k$ and $T_A=\{a_1,
a_2,  \ldots, a_k\}$ be a transversal of $Z(G)$ in $A$ with $a_1=1$. We
assume that $[G:Z(G)]=n$ and let $T_G:=T_A\cup \{b_1, b_2, \ldots, b_{n-k}\}$ be a
transversal of $Z(G)$ in $G$. It now follows that
$$G=A\uplus Z(G)b_1\uplus  Z(G)b_2\uplus \cdots \uplus  Z(G)b_{n-k},$$
is an abelian partition for $G$, from which the desired conclusion
$$\vartheta_a(G)\leqslant n-k+1=[G:Z(G)]-[A:Z(G)]+1,$$ will follow.
\end{proof}

We provide some examples that show that the upper bound in Theorem C is sharp.  We will make use of the following lemma in our examples.

\begin{lm} \label{minimal}
Let $G$ be a group and suppose there exist $a_1, \dots, a_n \in G$ so that $C_G (a_i)$ is abelian for $i = 1, \dots, n$.  If $G = A_1 \uplus A_2 \uplus \dots \uplus A_n$ is an abelian partition of $G$ with $A_1 = C_G (a_1)$ and $A_i = C_G (a_i) \setminus Z(G)$ for $i = 2, \dots, n$, then $A_1 \uplus A_2 \uplus \dots \uplus A_n$ is a minimal abelian partition of $G$.
\end{lm}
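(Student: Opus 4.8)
The plan is to establish the matching lower bound $\vartheta_a(G) \geqslant n$, since the displayed partition $A_1 \uplus \cdots \uplus A_n$ already witnesses $\vartheta_a(G) \leqslant n$. I would invoke the elementary inequality $\vartheta_a(G) \geqslant n(G)$ recorded in the Introduction (each part of an abelian partition is a commuting set, hence can contain at most one vertex of any noncommuting set). It therefore suffices to exhibit a noncommuting subset of size $n$, and the natural candidate is $\{a_1, a_2, \ldots, a_n\}$ itself.

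First I would check that each $a_i$ lies in its own part $A_i$. For $i = 1$ this is immediate, since $a_1 \in C_G(a_1) = A_1$. For $i \geqslant 2$ I would first observe $a_i \notin Z(G)$: otherwise $C_G(a_i) = G$ would be abelian, forcing $G$ abelian, which is excluded in this section. Hence $a_i \in C_G(a_i) \setminus Z(G) = A_i$. Since the parts $A_i$ are pairwise disjoint, the elements $a_i$ are in particular distinct.

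The heart of the argument is to show that no two of the $a_i$ commute. Suppose $a_i$ and $a_j$ commute for some $i \neq j$. Then $a_j \in C_G(a_i)$, and since $C_G(a_i)$ is abelian, every element of $C_G(a_i)$ commutes with $a_j$, giving $C_G(a_i) \subseteq C_G(a_j)$; by symmetry $C_G(a_i) = C_G(a_j)$. I would then derive a contradiction with the disjointness of the parts: this equality of centralizers forces $A_i$ and $A_j$ to coincide up to removal of $Z(G)$. Concretely, for $i, j \geqslant 2$ one gets $A_i = C_G(a_i)\setminus Z(G) = C_G(a_j)\setminus Z(G) = A_j$, while if $i = 1$ one gets $A_j = C_G(a_1)\setminus Z(G) = A_1 \setminus Z(G) \subseteq A_1$. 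In either case $A_i \cap A_j \neq \emptyset$ (using $|A_j| \geqslant 2 > 0$), contradicting that the $A_i$ are disjoint.

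Consequently $\{a_1, \ldots, a_n\}$ is a noncommuting set, so $n(G) \geqslant n$ and hence $\vartheta_a(G) \geqslant n(G) \geqslant n$. Combined with the upper bound this yields $\vartheta_a(G) = n$, so the given partition is minimal. The main obstacle is the middle step: one must carefully use the hypothesis that each $C_G(a_i)$ is abelian to upgrade ``$a_i$ and $a_j$ commute'' into ``$C_G(a_i) = C_G(a_j)$'', and then convert that equality into an overlap of parts that are supposed to be disjoint. The remaining steps are routine bookkeeping.
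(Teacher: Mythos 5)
Your proof is correct, but it takes a genuinely different route from the paper's. The paper argues directly against an arbitrary competing abelian partition $G = B_1 \uplus B_2 \uplus \cdots \uplus B_m$: the part $B_j$ containing $a_i$ is a commuting set, so $B_j \subseteq C_G(a_i) = A_i \cup Z(G)$, and since the $A_i$'s are pairwise disjoint (and each $a_i$ is noncentral), no single part $B_j$ can contain two of the $a_i$'s, whence $m \geqslant n$. You instead prove the intermediate claim that $\{a_1, \ldots, a_n\}$ is a noncommuting set --- upgrading ``$a_i$ commutes with $a_j$'' to $C_G(a_i) = C_G(a_j)$ using abelianness of both centralizers, then turning that equality into an overlap of supposedly disjoint parts --- and you finish by quoting the general bound $\vartheta_a(G) \geqslant n(G)$. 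The combinatorial core is the same in both arguments: the contradiction is always that some noncentral $a_j$, trapped inside $C_G(a_i) = A_i \cup Z(G)$, is forced into $A_i$ as well as $A_j$. But the two routes buy slightly different things. The paper's version is shorter, needing neither the symmetry step nor the auxiliary parameter $n(G)$. Yours yields the extra conclusion $n(G) \geqslant n$, hence $n(G) = \vartheta_a(G) = n$ under the lemma's hypotheses, which makes explicit the connection to Proposition~\ref{AC-groups} that the paper only remarks on informally (it calls that proposition ``a sort of converse'' of this lemma). Finally, note that both proofs rely on the standing convention of this section that $G$ is nonabelian in order to conclude $a_i \notin Z(G)$ for $i \geqslant 2$; you invoke this explicitly, while the paper leaves it implicit.
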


\begin{proof}
Suppose $G = B_1\uplus B_2 \uplus \cdots \uplus B_m$ is another abelian partition of $G$.  For each $i = 1, \dots, n$, we must have $a_i$ in $B_j$ for some $j$.  Observe that $B_j \subseteq C_G (a_i) = A_i \cup Z(G)$.  Since the $A_i$'s are pairwise disjoint, this implies that no $B_j$ contains more than one $a_i$.  We thus conclude that $m\geqslant n$.  This implies that $A_1 \uplus A_2 \uplus \dots \uplus A_n$  is a minimal abelian partition of $G$.
\end{proof}

We now present examples that meet the upper bound of Theorem C.  Suppose $G$ is one of the following groups of order $4n$:
$$D_{4n}=\langle a, b \ \mid \ a^{2}=1,  b^{2n}=1, a^{-1}ba=b^{-1}\rangle,$$ or 
$$Q_{4n}=\langle a, b \ \mid \  b^{2n}=1, b^n=a^2,
a^{-1}ba=b^{-1} \rangle,$$ where $n\geqslant 2$.
Here, $Z(G)=\{1, b^n\}$ and $A=\langle b\rangle$. Hence,
we have $$\vartheta_a(G)\leqslant [G:Z(G)]-[A:Z(G)]+1=2n-n+1=n+1.$$
Note, however, that we have a abelian partition of $G$ as follows. Take 
$$A_1=\langle b\rangle, \ {\rm and} \ A_i=\{ab^{i-2}, ab^{n+i-2}\}, \ i=2, 3, \ldots, n+1.$$
Then certainly $G=A_1\uplus A_2\uplus \cdots\uplus A_{n+1}$ is an abelian partition of $G$.  To see that it is minimal, observe that $C_G (b) = A = A_1$ and $C_G (ab^{i-2} )= \{ 1, b^{n}, ab^{i-2},ab^{n + i-2} \} = A_i \cup Z (G)$ for $i = 2, 3, \ldots, n+1$.   
By Lemma \ref{minimal}, $A_1\uplus A_2\uplus \cdots\uplus A_{n+1}$ is a minimal abelian partition, and so we have $\vartheta_a(G)=n+1$.   (This is a special case of Corollary \ref{Dihedral-GeneralizedQuaternion}.) 

It would be an interesting question to see if one could classify all groups where equality holds in Theorem C.
We next consider minimal abelian partitions of direct products.

\begin{lm}\label{direct-product}  
Let $G = H\times K$ be a direct product of two groups $H$ and $K$ for which $\vartheta_a (H)$ and $\vartheta_a (K)$ both are not $0$. Then $\vartheta_a (G) \leqslant  \vartheta_a(H) \times \vartheta_a(K)$.
\end{lm}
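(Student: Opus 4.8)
The plan is to construct an explicit abelian partition of $G = H \times K$ out of minimal abelian partitions of the two factors and simply count its parts. First I would fix minimal abelian partitions $H = A_1 \uplus A_2 \uplus \cdots \uplus A_s$ and $K = B_1 \uplus B_2 \uplus \cdots \uplus B_t$, where $s = \vartheta_a(H)$ and $t = \vartheta_a(K)$; these exist precisely because both invariants are nonzero, i.e. both factors are AP-groups (in particular both factors are nontrivial, so each $A_i$ and each $B_j$ has at least two elements). The candidate partition of $G$ is then the family of the $st$ product sets $A_i \times B_j$ with $1 \leqslant i \leqslant s$ and $1 \leqslant j \leqslant t$.

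Next I would verify the three defining properties of an abelian partition for this family. That the sets $A_i \times B_j$ are pairwise disjoint and cover all of $H \times K$ is immediate, since $\{A_i\}$ partitions $H$ and $\{B_j\}$ partitions $K$. Each part satisfies $|A_i \times B_j| = |A_i|\cdot|B_j| \geqslant 2 \cdot 2 = 4 \geqslant 2$. The only genuine point to check is that every $A_i \times B_j$ is a commuting set: given $(a,b)$ and $(a',b')$ in $A_i \times B_j$, we have $a,a' \in A_i$ and $b,b' \in B_j$, hence $aa' = a'a$ and $bb' = b'b$, and multiplying componentwise gives $(a,b)(a',b') = (aa',bb') = (a'a,b'b) = (a',b')(a,b)$. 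This is the crux of the argument, but it relies on nothing beyond componentwise multiplication in a direct product, so I do not expect it to be an obstacle.

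Finally, placing the identity $(1_H, 1_K)$ into the block $A_1 \times B_1$ (so that the convention $1 \in A_1$ is respected), I would conclude that $G = \biguplus_{i,j}(A_i \times B_j)$ is an abelian partition of $G$ into exactly $st$ parts; in particular $G$ is itself an AP-group. Since $\vartheta_a(G)$ is the least possible number of parts over all abelian partitions of $G$, this yields $\vartheta_a(G) \leqslant st = \vartheta_a(H) \cdot \vartheta_a(K)$, as desired. No step should present real difficulty, the entire content being the observation that a product of commuting sets is again commuting and that the product partition has the advertised number of blocks. The inequality is typically strict, since this product partition is rarely optimal, which presumably motivates the closing discussion on the behavior of minimal partitions under direct products.
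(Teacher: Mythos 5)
Your proposal is correct and follows exactly the paper's argument: take minimal abelian partitions of $H$ and $K$ and use the product sets $A_i \times B_j$ as the parts of an abelian partition of $G$, giving the bound $\vartheta_a(G) \leqslant \vartheta_a(H)\,\vartheta_a(K)$. The only difference is that you spell out the routine verifications (disjointness, covering, part sizes, and the commuting property via componentwise multiplication) that the paper dismisses as immediate.
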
 

\begin{proof}
The assertion is immediate, since if $H=A_1\uplus A_2 \uplus \cdots \uplus A_m$ is a minimal 
 abelian partition of $H$, and $K=B_1\uplus B_2 \uplus \cdots \uplus B_n$ is a minimal abelian partition of $K$, then  the subsets 
$A_i\times B_j$,  $1 \leqslant i \leqslant m$, $1 \leqslant j \leqslant n$ form an abelian partition of 
$G$, where $A_i\times B_j = \{(a, b) \mid a \in A_i, \ b\in B_j \}$. 
\end{proof}

\begin{remark} {\rm Note that the result of Lemma \ref{direct-product} is not true without the assumption 
$\vartheta_a (H)\neq 0\neq \vartheta_a (K)$. For instance, if $H$ is a NAP-group
and $K$ is a nontrivial abelian group, or equivalently $\vartheta_a (H)=0$
and  $\vartheta_a (K)=1$, then  $G=H \times K$ will be an AP-group.  Obviously, in this case $$\vartheta_a (G) > 0 =  \vartheta_a(H) \times \vartheta_a(K). $$  

We also do not know of any examples of groups $H$ and $K$ where the inequality in Lemma \ref{direct-product} is strict.  We ask whether it is possible to prove that in fact we must have equality in Lemma \ref{direct-product}.}
\end{remark}

We next show that in an a minimal abelian partition that each part must contain a noncentral element.

\begin{lm}\label{centralelements}   Let $G$ be an AP-group with a minimal abelian partition $A_1\uplus A_2\uplus \cdots\uplus A_m$, (recall $m\geqslant 3$). Then for each $i$, the set $A_i$ contains a noncentral element of $G$. In other words, for each $i$, $A_i \nsubseteq Z(G)$. 
\end{lm}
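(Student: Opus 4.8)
The plan is to argue by contradiction against the minimality of $m = \vartheta_a(G)$. Suppose that some part of the partition, say $A_i$, satisfies $A_i \subseteq Z(G)$. From the given partition I will construct a genuine abelian partition of $G$ with only $m-1$ parts, which is impossible since $m$ is the minimal AP-degree.

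The construction rests on a single observation: every element of $A_i$ is central and therefore commutes with every element of $G$. Since $m \geqslant 3$ guarantees the existence of at least one other part $A_j$ with $j \neq i$, the union $A_i \cup A_j$ is again a commuting set --- the elements of $A_j$ commute among themselves by hypothesis, and the central elements of $A_i$ commute with everything, so in particular the cross pairs commute. I would then replace the two parts $A_i$ and $A_j$ by the single part $A_i \cup A_j$, leaving all remaining parts untouched. This yields a partition of $G$ into $m-1$ commuting subsets. To confirm that it is an abelian partition I must check that each part still has at least two elements; this is clear, since the unchanged parts already did and the merged part has cardinality at least $|A_j| \geqslant 2$. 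The resulting $(m-1)$-abelian partition contradicts $\vartheta_a(G) = m$, and the contradiction forces $A_i \nsubseteq Z(G)$ for every $i$.

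I do not anticipate a serious obstacle here: the whole argument hinges on the fact that central elements impose no commuting constraints, so they can always be absorbed into a neighbouring part without enlarging the number of parts. The only points requiring a moment's care are the bookkeeping --- verifying that the merge preserves the size-at-least-two condition, and confirming that a distinct part $A_j$ is available to merge into, which is exactly where the hypothesis $m \geqslant 3$ enters. Note that this also handles the designated part $A_1$: even though $1 \in A_1$ is central by our standing convention, the lemma still forces $A_1$ to contain a noncentral element, and the same merging argument applies verbatim.
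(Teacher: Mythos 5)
Your proof is correct and follows essentially the same argument as the paper: assume some $A_i \subseteq Z(G)$, merge it with another part $A_j$ (which exists since $m \geqslant 3$) to form the commuting set $A_i \cup A_j$, and contradict the minimality of $m$. The extra bookkeeping you include (checking part sizes and the case of $A_1$) is sound but not needed beyond what the paper records.
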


\begin{proof}  If $A_i \subseteq Z(G)$ for some $i$, then  for each $j\neq i$, $A_i\cup A_j$ is a commuting set of $G$.  We obtain a smaller abelian partition of $G$ by replacing $A_i$ and $A_j$ with $A_i \cup A_j$.  This contradicts the minimality of $m$. 
\end{proof}

We now show that we can always arrange for a minimal abelian partition so that the center of the group is contained in the first part of the partition.

\begin{lm}\label{firstsummand}   Let $G$ be a (nonabelian) AP-group with $\vartheta_a(G)=m (\geqslant 3)$. Then there exists a minimal abelian partition 
$A_1\uplus A_2\uplus \cdots\uplus A_m$ of $G$ such that $Z(G) < A_1$.
\end{lm}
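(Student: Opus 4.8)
The plan is to start from an arbitrary minimal abelian partition $A_1 \uplus \cdots \uplus A_m$ with $1 \in A_1$ and to show that all of $Z(G)$ can be swept into the first part. The reduction is clean once we establish the following structural claim: \emph{in a minimal abelian partition, every part $A_i$ with $i \geqslant 2$ contains at least two noncentral elements}, i.e. $|A_i \setminus Z(G)| \geqslant 2$. Granting this, I would simply set $A_1' = A_1 \cup Z(G)$ and $A_i' = A_i \setminus Z(G)$ for $i \geqslant 2$. Since central elements commute with everything, $A_1'$ is commuting and contains $Z(G)$; by Lemma \ref{centralelements} the original $A_1$ already has a noncentral element, so $A_1'$ strictly contains $Z(G)$, giving $Z(G)<A_1'$. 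Each $A_i'$ ($i\geqslant 2$) is commuting of size $\geqslant 2$ by the claim, and the number of parts is unchanged, so $A_1' \uplus \cdots \uplus A_m'$ is again a minimal abelian partition, now with $Z(G)<A_1'$. (The case $Z(G)=1$ is trivial, since then $Z(G)=\{1\}<A_1$ automatically.)

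So the real work is the structural claim, and the engine is the elementary identity $C_G(x) = C_G(xz)$ for every $z \in Z(G)$, which holds because multiplying by a central element does not disturb any commutator. Suppose toward a contradiction that some part $A_a$ with $a \geqslant 2$ has exactly one noncentral element $x$, so that $A_a = \{x\} \cup Z_a$ with $Z_a = A_a \cap Z(G) \neq \emptyset$; since $1 \in A_1$ and the parts are disjoint, every element of $Z_a$ is nontrivial. Fix $z \in Z_a$ and consider the coset-mate $xz$. It is noncentral, distinct from $x$, and does not belong to $A_a$, so it lies in some part $A_t$ with $t \neq a$. Here is the key point: every element $h \in A_t$ commutes with $xz$, so by the centralizer identity $h$ commutes with $x$ as well; therefore $x$ commutes with all of $A_t$, and $A_t \cup \{x\}$ is commuting.

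I would then finish by exhibiting a smaller partition. Moving $x$ into $A_t$ and relocating the central elements $Z_a$ into $A_1$ empties and deletes $A_a$ while keeping every recipient part commuting and of size $\geqslant 2$; this produces an abelian partition with $m-1$ parts, contradicting $\vartheta_a(G) = m$. Hence no part $A_a$ with $a \geqslant 2$ can have a lone noncentral element, which is exactly the structural claim. I expect this claim to be the main obstacle: the naive strategy of peeling central elements off the other parts one at a time breaks down precisely when a part has the form $\{x\}\cup Z_a$, since then removing its central elements leaves an illegal singleton, and the whole difficulty is to rule such parts out. The centralizer identity $C_G(x)=C_G(xz)$ is what dissolves this apparent deadlock, because it forces the part housing $xz$ to be entirely compatible with $x$ and therefore collapsible.
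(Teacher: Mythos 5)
Your proposal is correct and runs on essentially the same engine as the paper's own proof: both arguments use the fact that commuting with $xz$ (for central $z$) is the same as commuting with $x$, so the part containing the translate can absorb the offending element and collapse the partition to $m-1$ parts, contradicting minimality — the paper executes this iteratively, moving one central element into the first part at a time after showing any size-$2$ part containing a central element forces a merge, while you execute it in one shot via your structural claim. The only detail to make explicit is that the negation of your claim also includes the case $A_a \subseteq Z(G)$ (no noncentral element at all), but that is precisely Lemma \ref{centralelements}, which you already invoke, so nothing is missing.
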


\begin{proof}  Let $B_1\uplus B_2\uplus \cdots\uplus B_m$ be a minimal abelian partition of $G$.
If $|Z(G)|=1$, then $Z(G)\subset B_1$ and there is nothing to prove, so assume $|Z(G)|>1$. Let
 $z\neq 1$ be a central element of $G$. Then $z\in B_i$ for some $i$. We claim that $|B_i|\geqslant 3$.
Otherwise, $|B_i|=2$, say 
$B_i=\{z, y\}$. Now, we consider the element $yz$ and assume that $yz\in B_k$ for some $k$.  But then 
$B_i\cup B_k$ forms a commuting set of $G$, which contradicts the minimality of $m$.  
This contradiction shows $|B_i|\geqslant 3$ as claimed.
Now, we take 
$B_1'=B_1\cup \{z\}$ and $B_i'=B_i\setminus \{z\}$. Therefore, we see that 
$$G=B_1'\uplus B_2\uplus \cdots\uplus  B_{i-1} \uplus B_i'\uplus B_{i+1}\uplus \cdots \uplus B_m,$$
is a minimal abelian partition of $G$ such that $z\in B_1'$.  The above process of adding a central element to $B_1$ may therefore repeated. 
At the end, after at most $(|Z(G)|-1)$ steps, we obtain the required minimal abelian partition $A_1\uplus A_2\uplus \cdots\uplus A_m$. Note that by Lemma \ref{centralelements}  it follows that $Z(G)\subset A_1$.  By Lemma \ref{centralelements}, we see that $Z (G) \ne A_1$, so we must have $Z(G) < A_1$.
\end{proof}

The ordered pair $(x, y)\in G\times G$ is called a {\em commuting pair} if  $xy=yx$. In \cite{Erdos-Straus} Erd\"{o}s and Tur\'{a}n proved the following result.

\begin{lm} {\bf (Erd\"{o}s -Tur\'{a}n)}\label{ET1967} The number of commuting pairs of elements of a group $G$ is $|G|c(G)$
where $c(G)$ is the number of conjugacy classes of $G$.
\end{lm}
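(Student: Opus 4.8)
The plan is to prove this classical counting identity by evaluating the cardinality of the set $P = \{(x, y) \in G \times G \mid xy = yx\}$ of commuting pairs in two different ways. First I would fiber $P$ over its first coordinate: for a fixed $x \in G$, the elements $y$ with $(x, y) \in P$ are precisely those that commute with $x$, that is, the elements of the centralizer $C_G(x)$. Summing over all choices of $x$ therefore gives
$$|P| = \sum_{x \in G} |C_G(x)|.$$

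The key step is to rewrite each centralizer order by means of the orbit--stabilizer theorem applied to the conjugation action of $G$ on itself. Under this action the stabilizer of $x$ is exactly $C_G(x)$ and its orbit is the conjugacy class $x^G$, so that $|C_G(x)| = |G|/|x^G|$. Substituting this into the previous display yields
$$|P| = |G| \sum_{x \in G} \frac{1}{|x^G|}.$$

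Finally I would collect the remaining sum according to conjugacy classes. Every element of a fixed class of size $s$ has a centralizer of the same order, and such a class contributes exactly $s$ summands, each equal to $1/s$, hence contributes $1$ in total; since there are $c(G)$ classes the whole sum equals $c(G)$, giving $|P| = |G|\, c(G)$ as asserted. I do not expect any genuine obstacle here: the argument is a straightforward double count, and is in fact nothing other than the Cauchy--Frobenius (Burnside) orbit-counting lemma applied to the conjugation action, whose fixed-point sets are the centralizers and whose orbits are the conjugacy classes. The only point deserving a word of care is the regrouping of the element-indexed sum into a class-indexed sum, but this is immediate once one notes that conjugate elements have centralizers of equal order.
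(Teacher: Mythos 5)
Your proof is correct. Note, however, that the paper itself gives no proof of this lemma: it is stated as a classical result and attributed to the Erd\H{o}s--Tur\'{a}n paper cited in the bibliography, so there is no argument in the paper to compare against. Your double count --- fibering the set of commuting pairs over the first coordinate to get $\sum_{x \in G} |C_G(x)|$, invoking orbit--stabilizer for the conjugation action to write $|C_G(x)| = |G|/|x^G|$, and then regrouping the sum by conjugacy classes so that each class contributes exactly $1$ --- is the standard proof of this identity (equivalently, the Cauchy--Frobenius orbit-counting lemma applied to conjugation), and it is complete and correct as written.
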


With this Lemma in hand, we prove Theorem D.

\begin{proof}[Proof of Theorem D]  Suppose $G=A_1\uplus A_2\uplus \cdots\uplus A_m$ is an abelian partition of $G$.
We make use of the following inequality
$$ \sum_{i=1}^m |A_i|^2\geqslant \frac{1}{m} \left(\sum_{i=1}^m |A_i|\right)^2,$$
which follows at once from the well-known Cauchy-Schwarz inequality$^1$\footnote{$^1$Let $a_1, a_2, \ldots, a_m$ and  $b_1, b_2, \ldots, b_m$ be arbitrary real numbers. The well-known Cauchy-Schwarz's  inequality asserts that $(\sum_{i=1}^m a_ib_i)^2\leqslant (\sum_{i=1}^m a_i^2)(\sum_{i=1}^m b_i^2)$.}\!\!\!\!. Now, by using the fact that $\sum_{i=1}^m |A_i|=|G|$, we obtain 
$$\sum_{i=1}^m |A_i|^2\geqslant \frac{|G|^2}{m}.$$ On the other hand, since $A_i$ is a commuting subset of $G$ for each $i$, $1\leqslant i\leqslant m$, the set $A_i^2=A_i\times A_i$  consists of commuting pairs of elements of $A_i$, and  $\cup_{i=1}^m A_i^2$ is a disjoint union. It follows by Lemma \ref{ET1967} that 
$$|G|c(G)\geqslant \sum_{i=1}^m |A_i|^2.$$
Collecting the above two inequalities together, we obtain $m\geqslant \lceil |G|/c(G)\rceil$.
\end{proof}

As a corollary to Theorem D, we prove that $c (G)$ is an upper bound for the minimal size of the $A_i$'s in an abelian partition of $G$. 

\begin{corollary}\label{inverse}   Let $G$ be an AP-group and let 
\begin{equation}\label{m1} 
G=A_1\uplus A_2\uplus \cdots\uplus A_m,
\end{equation} be an abelian partition of $G$. Then, the following condition holds:
Let $s$ denote the minimal size of the $A_i$'s. Then, we have  
$s\leqslant c(G)$.
\end{corollary}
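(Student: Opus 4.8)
The plan is to deduce the corollary from Theorem D by combining it with the trivial pigeonhole estimate $|G| \geqslant m s$. First I would note that the argument in the proof of Theorem D nowhere uses the minimality of the partition: the Cauchy--Schwarz inequality together with the Erd\H{o}s--Tur\'an count in Lemma \ref{ET1967} shows that \emph{every} abelian partition $G = A_1 \uplus A_2 \uplus \cdots \uplus A_m$ satisfies $m \geqslant |G|/c(G)$. I would therefore invoke this inequality directly for the partition \eqref{m1} in the statement, without assuming it is minimal.

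Next I would bring in the definition of $s$. Since $s = \min_i |A_i|$, every part satisfies $|A_i| \geqslant s$, so summing over all $m$ parts gives $|G| = \sum_{i=1}^m |A_i| \geqslant m s$. Substituting the lower bound $m \geqslant |G|/c(G)$ into this chain yields $|G| \geqslant m s \geqslant (|G|/c(G))\, s$. Cancelling the positive factor $|G|$ leaves $c(G) \geqslant s$, which is exactly the desired bound $s \leqslant c(G)$.

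There is essentially no serious obstacle here; the whole argument is a two-line manipulation of two inequalities. The only point that deserves care is the first step: one must observe that the estimate $m \geqslant |G|/c(G)$ coming from the proof of Theorem D holds for an \emph{arbitrary} abelian partition and not merely for a minimal one, so that it legitimately applies to the given partition \eqref{m1}. I would also note that the weaker form without the ceiling is all that is needed, so the rounding present in the statement of Theorem D causes no difficulty.
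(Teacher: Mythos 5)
Your proof is correct, but it takes a genuinely more elementary route than the paper's. The paper's proof re-invokes the Erd\"{o}s--Tur\'{a}n count inside the corollary: it starts from the quadratic inequality $|G|c(G) \geqslant \sum_{i=1}^m |A_i|^2$ (Lemma \ref{ET1967}), bounds each summand below by $s^2$ to get $|G|c(G) \geqslant m s^2$, and then applies Theorem D in the form $m \geqslant |G|/c(G)$ to obtain $|G|c(G) \geqslant |G|s^2/c(G)$, i.e.\ $s^2 \leqslant c(G)^2$, whence $s \leqslant c(G)$. You instead replace the quadratic count by the trivial linear one, $|G| = \sum_{i=1}^m |A_i| \geqslant ms$, and combine it with the same bound $m \geqslant |G|/c(G)$ to cancel $|G|$ and conclude $s \leqslant c(G)$ directly. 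Your version thus needs nothing about commuting pairs beyond what is already packaged in Theorem D, whereas the paper's argument uses the Erd\"{o}s--Tur\'{a}n inequality a second time; the two proofs buy the same conclusion, and yours is arguably cleaner since it isolates Theorem D as the only group-theoretic input. One small remark: your caution about applying Theorem D to a non-minimal partition can be settled even more cheaply than by inspecting its proof --- for an arbitrary abelian partition one has $m \geqslant \vartheta_a(G)$ by the very definition of $\vartheta_a(G)$ as a minimum, so Theorem D as stated already gives $m \geqslant \vartheta_a(G) \geqslant \lceil |G|/c(G)\rceil \geqslant |G|/c(G)$; the paper's own proof silently relies on the same point.
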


\begin{proof}  As in the proof of Theorem D,  we consider the inequality $|G|c(G)\geqslant \sum_{i=1}^m |A_i|^2$. Now, if $s$ denotes the minimal size of the commuting sets $A_i$, then we obtain 
$$|G|c(G)\geqslant \sum_{i=1}^m |A_i|^2\geqslant \sum_{i=1}^m s^2=ms^2\geqslant \frac{|G|s^2}{c(G)},$$
where we have used Theorem D to obtain the last equality. The result is now immediate.
\end{proof}

We do not know of any examples of nonabelian groups where the bound in Lemma \ref{inverse} is obtained.  It would be interesting to see if one could prove a better general bound.


\section{Computing the minimal AP-degree $\vartheta_a (G)$}

We now focus specifically on computing $\vartheta_a (G)$ for various groups $G$.  One question that we would like to answer is: what integers can occur as $\vartheta_a (G)$.  As a direct consequence of the definition, we have the following.

\begin{lm}\label{lem2.2}  Let $G$ be a nontrivial group.  Then $\vartheta_a(G)=1$ if and only if $G$ is abelian.
\end{lm}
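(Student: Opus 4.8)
The plan is to prove both implications directly from the definition of $\vartheta_a$, treating the two directions separately and keeping careful track of the convention that $\vartheta_a(G) = 0$ exactly when $G$ is a NAP-group.

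For the reverse direction, I would assume $G$ is abelian and nontrivial. Then $G$ itself is a commuting subset of $G$, and since $G$ is nontrivial we have $|G| \geqslant 2$. Hence the single block $A_1 = G$ is a legitimate $1$-abelian partition (it satisfies the size requirement $|A_1| \geqslant 2$), which shows $G$ is an AP-group and that $\vartheta_a(G) \leqslant 1$. Any abelian partition has at least one part, so $\vartheta_a(G) \geqslant 1$, and combining the two inequalities gives $\vartheta_a(G) = 1$.

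For the forward direction, I would assume $\vartheta_a(G) = 1$. By definition of the minimal AP-degree, this means $G$ admits an abelian partition consisting of a single block, say $G = A_1$ with $A_1$ a commuting set. Since $A_1 = G$, every pair of elements of $G$ commutes, so $G$ is abelian; the requirement $|A_1| \geqslant 2$ forces $|G| \geqslant 2$, consistent with $G$ being nontrivial.

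I do not expect any genuine obstacle here, as the statement is essentially definitional; the only point requiring care is bookkeeping. Specifically, one must distinguish the value $\vartheta_a(G) = 1$ from the value $0$ reserved for NAP-groups, and one should note that this lemma is compatible with the earlier observation that a nonabelian AP-group necessarily has $n \geqslant 3$, so that a nonabelian group can never attain $\vartheta_a(G) = 1$ (nor $2$). This confirms that the value $1$ is achieved precisely by the nontrivial abelian groups.
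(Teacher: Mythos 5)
Your proof is correct and is exactly the definitional argument the paper has in mind: the paper states this lemma without proof, remarking only that it is "a direct consequence of the definition," and your two directions (a nontrivial abelian $G$ forms a single commuting block of size at least $2$, and conversely a one-block abelian partition forces $G$ itself to be a commuting set) fill in precisely that reasoning. No gaps; your bookkeeping about the value $0$ for NAP-groups and the impossibility of $\vartheta_a(G)=2$ is consistent with the paper's conventions and with its Lemma 4.2.
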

 
On the other hand, it follows immediately using the pigeon-hole principle that $n(G)\leqslant \vartheta_a(G)$ for every finite group $G$.
Moreover, if $x$ and $y$ don not commute, then $\{x, y, xy\}$ is a noncommuting subset of $G$, this forces $n(G)\geqslant 3$. 
Thus for any nonabelian
AP-group $G$, $\vartheta_a(G) \geqslant 3$. So we have the following result.

\begin{lm}\label{lem2.3}  If $G$ is a nonabelian group, then $\vartheta_a(G)\geqslant  n(G)\geqslant 3$.  In particular, there is no finite group $G$ with $\vartheta_a(G) = 2$.
\end{lm}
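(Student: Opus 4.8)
The plan is to prove Lemma~\ref{lem2.3} by establishing the two claimed inequalities $\vartheta_a(G) \geqslant n(G)$ and $n(G) \geqslant 3$, and then combining them. The final sentence about the nonexistence of a group with $\vartheta_a(G) = 2$ will follow as an immediate numerical consequence.

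First I would verify $n(G) \geqslant 3$ for any nonabelian group $G$. Since $G$ is nonabelian, there exist elements $x, y \in G$ with $xy \ne yx$. I would then check that $\{x, y, xy\}$ is a noncommuting set: the pairs $\{x,y\}$, $\{x,xy\}$, and $\{y,xy\}$ must each fail to commute. The first is given; for the other two, observe that if $x$ commuted with $xy$ then $x$ would commute with $y$, and similarly if $y$ commuted with $xy$ then $y$ would commute with $x$ — both contradicting the choice of $x, y$. Hence $\{x,y,xy\}$ is a noncommuting set of size $3$ (these three elements are also distinct, since $x = xy$ forces $y = 1$, etc.), so $n(G) \geqslant 3$.

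Next I would establish the key inequality $\vartheta_a(G) \geqslant n(G)$ via the pigeon-hole principle, exactly as the text indicates. Fix a minimal abelian partition $G = A_1 \uplus A_2 \uplus \cdots \uplus A_m$ with $m = \vartheta_a(G)$, and let $B \subseteq G$ be a noncommuting set with $|B| = n(G)$. Since no two distinct elements of $B$ commute, no single part $A_i$ can contain two elements of $B$ (each $A_i$ is a commuting set). Thus the assignment sending each element of $B$ to the part containing it is injective, giving $n(G) = |B| \leqslant m = \vartheta_a(G)$. Combining this with the previous paragraph yields $\vartheta_a(G) \geqslant n(G) \geqslant 3$.

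I do not anticipate a serious obstacle here, as both steps are elementary; the only point requiring a little care is confirming that the three elements $x$, $y$, $xy$ are genuinely distinct and pairwise noncommuting in the first step. Finally, the ``in particular'' claim follows since $\vartheta_a(G) = 1$ characterizes abelian groups by Lemma~\ref{lem2.2}, while $\vartheta_a(G) \geqslant 3$ for all nonabelian groups; as $\vartheta_a(G)$ is $0$ precisely for NAP-groups, the value $2$ is excluded for every finite group $G$.
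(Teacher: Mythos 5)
Your proof is correct and follows essentially the same route as the paper: the paper likewise derives $n(G)\geqslant 3$ from the noncommuting set $\{x,y,xy\}$ and obtains $\vartheta_a(G)\geqslant n(G)$ by the pigeon-hole principle, since a commuting part can contain at most one element of a noncommuting set. Your added checks (distinctness of $x$, $y$, $xy$, and handling the NAP-case where $\vartheta_a(G)=0$ in the ``in particular'' claim) are details the paper leaves implicit, but they do not change the argument.
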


 In the case that $G$ is a nonabelian group of order $8$, it can be shown that  $\vartheta_a(G)=3$, so the
bound  can be achieved. In general, it was shown in \cite{MMS} that $\vartheta_a(G)=3$ if and only if  $G$ is isomorphic to the direct product $P\times Q$, where $P$ is a $2$-group with $P/Z(P)\cong {\Bbb Z}_2\times {\Bbb Z}_2$  and  $Q$ is an abelian group.   We write ${\Bbb Z}_m$ for a cyclic group of order $m$.  In the same paper, it is proved that $\vartheta_a(G)=4$ if and only if  $|Z(G)|\geqslant 2$ and $G/Z(G)$ is isomorphic to  ${\Bbb Z}_3\times {\Bbb Z}_3$ or ${\Bbb S}_3$.

It might be of interest to investigate $\vartheta_a(G)$ for families of finite groups, particularly finite simple groups. Let us consider the problem of finding  $\vartheta_a(G)$ for  certain finite groups.


\subsection{AC-groups} 

A group $G$ is called an {\em AC-group}, if the centralizer of every noncentral element of $G$ is abelian.  These groups have been investigated by many authors in various contexts, see for instance \cite{DHJ, Rocke, Schmidt}$^1$\footnote{$^1$Note that some of these references refer to these groups as CA-groups, and others use CA-groups to denote groups where every nonidentity element has an abelian centralizer.}\!\!\!\!.  A routine argument shows that $G$ is an AC-group if and only if for all noncentral elements $x$ and $y$ in $G$ either $C_G(x) = C_G(y)$ or $C_G(x)\cap C_G(y) = Z(G)$. We will use it afterward without mention. 

As mentioned previously,  $n(G)\leqslant \vartheta_a(G)$ since two elements that do not commute cannot be in the same commuting set. The following result  determines a class of groups $G$ with $\vartheta_a(G)=n(G)$.   This next Proposition can be viewed as a sort of converse for Lemma \ref{minimal} when $G$ is an AC-group.

\begin{proposition}\label{AC-groups}  Let $G$ be an AC-group where all the centralizers have size at least $3$.  Then, we have   $\vartheta_a(G)=n(G)$. 
\end{proposition}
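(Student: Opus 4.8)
The plan is to exploit the block structure that the AC-condition imposes on the noncentral elements, show that both $n(G)$ and $\vartheta_a(G)$ equal the number of these blocks, and then use the size hypothesis only at the very end to guarantee that the natural partition has all parts of size at least two. Since Lemma \ref{lem2.3} already gives $\vartheta_a(G)\geqslant n(G)$, the real content is an upper bound matching $n(G)$.

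First I would record the structural decomposition. Let $C_1,\dots,C_k$ be the distinct subgroups arising as $C_G(x)$ as $x$ ranges over the noncentral elements; each is abelian by hypothesis. Using the stated characterization of AC-groups, if two noncentral elements $x,y$ commute then $x\in C_G(x)\cap C_G(y)$ is noncentral, so the alternative $C_G(x)\cap C_G(y)=Z(G)$ is impossible and we must have $C_G(x)=C_G(y)$. Consequently each noncentral element lies in exactly one set $C_i\setminus Z(G)$ (disjointness holds because an element of $C_i\cap C_j$ for $i\ne j$ would be forced into $Z(G)$), and two noncentral elements commute if and only if they lie in the same $C_i\setminus Z(G)$. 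Thus $G\setminus Z(G)=\biguplus_{i=1}^{k}(C_i\setminus Z(G))$, with ``commuting'' meaning exactly ``same block''.

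Next I would compute $n(G)=k$. On one hand, a noncommuting set of size $\geqslant 2$ contains no central element, and it meets each block in at most one element since any two elements of a block commute; hence $n(G)\leqslant k$. On the other hand, choosing one element from each nonempty block $C_i\setminus Z(G)$ yields a noncommuting set of size $k$, so $n(G)\geqslant k$. With $n(G)=k$ in hand, it remains only to produce an abelian partition with exactly $k$ parts. For this I would pick $a_i\in C_i\setminus Z(G)$ and set $A_1=C_G(a_1)=C_1$ and $A_i=C_i\setminus Z(G)$ for $i\geqslant 2$; by the decomposition above these are pairwise disjoint commuting subsets covering $G$.

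The one point that genuinely uses the hypothesis is the verification that $|A_i|\geqslant 2$, and I expect this size bookkeeping (rather than anything conceptual) to be the step needing care. Since $Z(G)$ is a proper subgroup of the abelian group $C_i$, we have $[C_i:Z(G)]\geqslant 2$, so $|C_i\setminus Z(G)|=|C_i|-|Z(G)|\geqslant |Z(G)|$; this is already at least $2$ when $|Z(G)|\geqslant 2$, while when $|Z(G)|=1$ the assumption $|C_i|\geqslant 3$ gives $|C_i\setminus Z(G)|=|C_i|-1\geqslant 2$. Hence every part has size at least two, so the displayed decomposition is a bona fide abelian partition, and Lemma \ref{minimal} (applied with the elements $a_i$, whose centralizers are abelian) shows it is minimal. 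Therefore $\vartheta_a(G)=k=n(G)$, as required.
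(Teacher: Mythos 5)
Your proof is correct and follows essentially the same route as the paper's: both decompose $G$ into the distinct abelian centralizers of noncentral elements, form the partition $C_1 \uplus (C_2\setminus Z(G)) \uplus \cdots \uplus (C_k\setminus Z(G))$, verify each part has size at least $2$ using the centralizer-size hypothesis, and combine the resulting upper bound with $n(G)\leqslant \vartheta_a(G)$. The only organizational difference is that the paper indexes these centralizers by a fixed maximal noncommuting set $\{x_1,\dots,x_n\}$ (maximality giving the covering of $G$), whereas you first establish the intrinsic block decomposition of $G\setminus Z(G)$ and then deduce that $n(G)$ equals the number of blocks; this is a rearrangement of the same argument, not a different one.
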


\begin{proof}
Suppose that $G$ is an AC-group where all the centralizers have size at least $3$.  Let $n=n(G)$ and assume that 
$\{x_1, x_2, \ldots, x_n\}$ is a noncommuting subset of maximal cardinality in $G$.  Then, each element $g$ in $G$
must commute with at least one of the $x_i$, and hence $G$ is covered by the centralizers $C_G(x_i)$, $i=1, 2, \ldots, n$,
which intersect pairwise only in $Z(G)$:
$$G=C_G(x_1)\cup C_G(x_2)\cup \cdots \cup C_G(x_n).$$
Put $A_1=C_G (x_1)$ and $A_i=C_G(x_i)\setminus Z(G)$ for $i = 
2, 3, \ldots, n$.  It is not difficult to see that $$G=A_1\uplus A_2\uplus \cdots \uplus A_n.$$  Obviously, $|A_1| = |C_G (x_1)| \geqslant 3$.  Suppose that $i \geqslant 2$.  Observe that $A_i$ will be a union of cosets of $Z(G)$.  If $|Z (G)| \geqslant 2$, then $|A_i| \geqslant |Z(G)| \geqslant 2$.  Thus, we may assume that $Z(G) = 1$.  In this case, $|A_i| = |C_G(x_i) \setminus \{ 1 \}| = |C_G (x_i)| - 1 \geqslant 3 -1 =2$.  Thus, we have $|A_i| \geqslant 2$ in all cases.  Therefore,  $$A_1\uplus A_2\uplus \cdots \uplus A_n,$$ will be an abelian partition of $G$.  (Note that it will be true exactly when all of the 
centralizers have size at least $3$.)  Hence, $v_a (G)\leqslant n$. But, it is always true that $n\leqslant \vartheta_a(G)$, 
and we thus have equality. \end{proof}

We note that the hypothesis in Proposition \ref{AC-groups} that the centralizers have size at least $3$ is necessary.  The group ${\Bbb S}_3$ is easily seen to be an AC-group, but it has centralizers of size $2$ and it is not difficult to see that it is not an AP group.  From Proposition \ref{AC-groups} we obtain the following corollaries.  In the first case, we have $G/Z(G)$ is elementary abelian group of rank $2$.

\begin{corollary} Let $G$ be a $p$-group for some odd prime $p$. If $G/Z(G)$ is elementary abelian of order $p^2$, then we have $\vartheta_a(G)=n(G)=p+1$. 
\end{corollary}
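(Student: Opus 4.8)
The plan is to apply Proposition \ref{AC-groups} directly, so the main work is to verify its two hypotheses for a $p$-group $G$ (with $p$ odd) satisfying $G/Z(G) \cong \mathbb{Z}_p \times \mathbb{Z}_p$, and then to compute $n(G)$. First I would establish that $G$ is an AC-group. The key structural fact is that for any noncentral $x \in G$, the centralizer $C_G(x)$ properly contains $Z(G)$ and is a proper subgroup of $G$ (proper since $x$ is noncentral, so $G$ is nonabelian and $C_G(x) \neq G$). Passing to the quotient $\overline{G} = G/Z(G) \cong \mathbb{Z}_p \times \mathbb{Z}_p$, the image $\overline{C_G(x)}$ is a nontrivial proper subgroup of an elementary abelian group of rank $2$, hence has order exactly $p$ and is cyclic. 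This forces $|C_G(x)| = p\,|Z(G)|$ and shows $C_G(x)/Z(G)$ is cyclic, so $C_G(x)$ is abelian (a group whose quotient by a central subgroup is cyclic is abelian). That gives the AC-property.

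Next I would check the centralizer size hypothesis. Since $p$ is odd, $|Z(G)| \geqslant 1$ and $|C_G(x)| = p\,|Z(G)| \geqslant p \geqslant 3$ for every noncentral $x$; the centralizer of a central element is all of $G$, which is even larger. Thus all centralizers have size at least $3$, and Proposition \ref{AC-groups} applies to give $\vartheta_a(G) = n(G)$. It remains to compute $n(G) = p+1$. For the lower bound I would note that the distinct abelian subgroups of the form $C_G(x)$ correspond to the distinct subgroups of order $p$ in $\overline{G} \cong \mathbb{Z}_p \times \mathbb{Z}_p$, of which there are exactly $p+1$; picking one noncentral element from each of these $p+1$ maximal abelian subgroups yields a noncommuting set of size $p+1$, since two such elements lie in distinct maximal abelian subgroups and hence (by the AC-property, as their centralizers meet in $Z(G)$) do not commute. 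For the upper bound, any noncommuting set contains at most one element from each $C_G(x_i)$, and since every noncentral element lies in one of these $p+1$ centralizers while central elements commute with everything, no noncommuting set can have more than $p+1$ elements.

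The main obstacle I anticipate is the careful bookkeeping in the computation of $n(G)$ — specifically, establishing that the $p+1$ subgroups of order $p$ in $\overline{G}$ lift to exactly the distinct maximal abelian subgroups containing $Z(G)$, and that choosing one noncentral representative from each genuinely produces a pairwise noncommuting family. This relies on the AC-characterization recalled in the excerpt: for noncentral $x, y$, either $C_G(x) = C_G(y)$ or $C_G(x) \cap C_G(y) = Z(G)$. Two representatives from different order-$p$ subgroups of $\overline{G}$ have centralizers that are distinct (their images in $\overline{G}$ differ), hence meet in $Z(G)$, hence the representatives fail to commute. Once this correspondence is nailed down, both the lower and upper bounds for $n(G)$ follow, and combining with $\vartheta_a(G) = n(G)$ completes the proof that $\vartheta_a(G) = n(G) = p+1$.
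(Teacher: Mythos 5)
Your proposal is correct and follows exactly the route the paper intends: the paper states this corollary as an immediate consequence of Proposition \ref{AC-groups}, with no further proof given. Your write-up simply supplies the omitted details --- that $C_G(x)/Z(G)$ has order $p$ for noncentral $x$ (hence $C_G(x)$ is abelian of size $p\,|Z(G)| \geqslant 3$, giving the AC-hypothesis), and that the $p+1$ order-$p$ subgroups of $G/Z(G)$ yield $n(G)=p+1$ --- and all of these steps are sound.
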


We also obtain the result for dihedral and generalized quaternion groups.

\begin{corollary}\label{Dihedral-GeneralizedQuaternion}  If $G$ is isomorphic to one of the groups $D_{4n}$ or $Q_{4n}$, where 
$n\geqslant 2$ is an integer, then we have 
$\vartheta_a(G)=n(G)=n+1$.
\end{corollary}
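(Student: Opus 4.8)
The plan is to recognize both $D_{4n}$ and $Q_{4n}$ as AC-groups all of whose centralizers have order at least $3$, so that Proposition \ref{AC-groups} applies and gives $\vartheta_a(G)=n(G)$; the remaining task is then to compute $n(G)=n+1$. I would treat the two families simultaneously, since they share the defining relation $a^{-1}ba=b^{-1}$ (equivalently $ba=ab^{-1}$), the same center $Z(G)=\{1,b^n\}$, and the same abelian subgroup $\langle b\rangle$ of index $2$; the only difference, $a^2=1$ versus $a^2=b^n$, changes the isomorphism type of the small centralizers but not their orders.

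First I would compute every centralizer using $ba=ab^{-1}$. Since $\langle b\rangle$ is abelian of order $2n$ and index $2$, it is a maximal abelian subgroup, and a direct calculation shows that a power $b^i$ commutes with an element of the coset $a\langle b\rangle$ only when $b^i\in Z(G)$; hence $C_G(b^i)=\langle b\rangle$ for every noncentral power of $b$. For the other coset, the same relation gives that $ab^j$ commutes with $ab^k$ precisely when $j\equiv k\pmod{n}$ and commutes with a power of $b$ only when that power is central, so $C_G(ab^j)=\{1,b^n,ab^j,ab^{j+n}\}$ has order $4$ (Klein four in $D_{4n}$, cyclic in $Q_{4n}$). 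Thus the centralizer of every noncentral element is abelian, so $G$ is an AC-group, and every centralizer has order $2n$, $4$, or $|G|=4n$, each at least $3$ since $n\geqslant 2$. Proposition \ref{AC-groups} then yields $\vartheta_a(G)=n(G)$.

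It remains to show $n(G)=n+1$. The maximal abelian subgroups of $G$ are exactly $\langle b\rangle$ together with the $n$ subgroups $M_j=\{1,b^n,ab^j,ab^{j+n}\}$ for $j=0,1,\ldots,n-1$; these $n+1$ subgroups pairwise meet in $Z(G)$, and their union partitions the noncentral elements of $G$. A noncommuting set with at least two elements contains no central element, and any two elements lying in a common abelian subgroup commute, so such a set meets each of the $n+1$ maximal abelian subgroups in at most one point, giving $n(G)\leqslant n+1$. For the reverse inequality I would exhibit the set $\{b,a,ab,ab^2,\ldots,ab^{n-1}\}$: using $ba=ab^{-1}$ one checks that $b$ commutes with no $ab^j$ (as $n\geqslant 2$) and that $ab^i,ab^j$ commute only when $i\equiv j\pmod{n}$, so no two of these $n+1$ elements commute. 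Hence $n(G)=n+1$, and combining with the previous paragraph gives $\vartheta_a(G)=n(G)=n+1$.

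The centralizer computations and the verification that the displayed set is noncommuting are routine bookkeeping. The one step that requires genuine care is the upper bound $n(G)\leqslant n+1$, which rests on the AC-group structure forcing the maximal abelian subgroups to partition the noncentral elements, so that a noncommuting set can draw on each such subgroup at most once; I expect no obstacle beyond organizing this argument, since the identical centralizer lattice governs both families.
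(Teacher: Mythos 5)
Your proof is correct, and it follows the paper's intended route through Proposition \ref{AC-groups}: verify that $D_{4n}$ and $Q_{4n}$ are AC-groups whose centralizers all have order at least $3$ (the noncentral centralizers being $\langle b\rangle$ of order $2n$ and the $n$ subgroups $\{1,b^n,ab^j,ab^{j+n}\}$ of order $4$), conclude $\vartheta_a(G)=n(G)$, and then pin down the common value. Where you diverge is in how the value $n+1$ is obtained. The paper computes $\vartheta_a(G)=n+1$ in the example following the proof of Theorem C, by exhibiting the explicit abelian partition $A_1=\langle b\rangle$, $A_i=\{ab^{i-2},ab^{n+i-2}\}$ for $i=2,\dots,n+1$ and certifying its minimality with Lemma \ref{minimal}, which requires recognizing each part (after adjoining $Z(G)$) as an abelian centralizer; the value of $n(G)$ then falls out of the proposition. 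You instead compute $n(G)=n+1$ directly on the clique side of the noncommuting graph: the upper bound because the $n+1$ maximal abelian subgroups pairwise meet in $Z(G)$ and cover $G$, so a noncommuting set of size at least $2$ avoids the center and meets each such subgroup in at most one point, and the lower bound from the explicit noncommuting set $\{b,a,ab,\dots,ab^{n-1}\}$; the value of $\vartheta_a(G)$ then falls out of the proposition. The two computations rest on the same centralizer bookkeeping and are of comparable length; yours has the small advantage of bypassing Lemma \ref{minimal} entirely (your count of $n(G)$ is self-contained, in the same spirit as the paper's treatment of ${\Bbb A}_5$ in Lemma \ref{lem2.4}), while the paper's version has the advantage of producing the minimal abelian partition explicitly, which it reuses as a sharpness example for the bound in Theorem C.
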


We have found one family of groups where the equality $n (G) = \vartheta_a (G)$ holds.  We note that this equality holds in ${\Bbb S}_4$ which is not an AC-group, so there exist other groups that satisfy this equality.  {\em Can we classify all groups that satisfy this equality?}


\subsection{Frobenius groups}

A nontrivial proper subgroup $H$ of a group $G$ is a {\em Frobenius complement} in $G$ if $H\cap H^g =1$ for all $g\in G\setminus H$.  A group which possesses a Frobenius complement is called a {\em Frobenius group}.
A classical result of Frobenius (see \cite[Theorem 7.5]{Gor}) asserts that for a Frobenius complement $H$ in $G$, the subset 
$$ N:=\left(G\setminus \bigcup_{g\in G} H^g\right)\cup  \{1\},$$
(which is uniquely determined by $H$) is a characteristic subgroup of $G$. The subgroup $N$
is said to be the {\em Frobenius kernel} of $G$ (with respect to the Frobenius complement $H$). 

We mention without proof that in fact a Frobenius group has a unique conjugacy class of complements and a unique kernel. Furthermore, one verifies directly from the definition that 
$$G=\bigcup_{n\in N} H^n\cup N. $$
Now, using the following well known facts (see \cite[p. 121]{Isaacs}):
\begin{itemize}
\item[{\rm (a)}] $C_G(n)\subseteq N$ for all $1\neq n\in N$;
\item[{\rm (b)}] $C_H(n)=1$ for all $1\neq n\in N$; and
\item[{\rm (c)}] $C_G(h)\subseteq H$ for all $1\neq h\in H$;
\end{itemize}
it follows at once that if $H$ and $N$ are abelian and $|H|\geqslant 3$, then  
$\vartheta_a (G)=|N|+1$. We now consider the general case where $H$ and $N$ may be nonabelian.

\begin{proposition} Let $G$ be a Frobenius group with Frobenius complement $H$ and Frobenius kernel $N$. If  $|H| \geqslant 3$, then we have
$$\vartheta_a(G)=|N|\vartheta_a(H)+\vartheta_a(N).$$
\end{proposition}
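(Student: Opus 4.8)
The plan is to exploit the fact that, once the identity is removed, the commuting graph of $G$ splits into the commuting graph of $N$ together with $|N|$ disjoint copies of the commuting graph of $H$. Using the decomposition $G=\bigcup_{n\in N}H^n\cup N$, I would first record that the conjugates $\{H^n : n\in N\}$ are pairwise disjoint except for $1$, that $H^n\cap N=1$, and that there are exactly $|N|$ of them (since the complement is self-normalizing, so $H^n=H^{n'}$ forces $n'n^{-1}\in N\cap H=1$). The key observation is then: if $S\subseteq G$ is commuting with $|S|\geqslant 2$, all nonidentity elements of $S$ lie in a single one of the pieces $N,H^{n_1},\dots$. Indeed, if nonidentity $u,v$ lie in two different pieces and commuted, then facts (a)--(c) put $C_G(u)$ inside one piece while $v\in C_G(u)$ lies in another, a contradiction. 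Hence every part of any abelian partition has all its nonidentity elements inside one piece, and since the identity belongs to exactly one part, it can ``help'' only one piece.

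Next I would prove a bridging lemma: for each piece $K\in\{N,H\}$, the minimum number $q(K)$ of commuting sets of size $\geqslant 2$ partitioning $K\setminus\{1\}$ equals $\vartheta_a(K)$. One inequality is immediate, since adjoining $1$ to a part of a partition of $K\setminus\{1\}$ gives an abelian partition of $K$, so $\vartheta_a(K)\leqslant q(K)$. For the reverse I take a minimal abelian partition of $K$ with $1\in A_1$ and delete $1$ from $A_1$; this is legitimate provided $|A_1|\geqslant 3$. When $K$ is abelian this is clear, because $K\setminus\{1\}$ is itself one commuting set of size $|K|-1\geqslant 2$ (here I use $|H|\geqslant 3$ and $|N|\geqslant 4$, the latter since $|H|$ divides $|N|-1$). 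When $K$ is nonabelian I invoke Lemma \ref{firstsummand} to obtain a minimal partition with $Z(K)<A_1$, whence $|A_1|\geqslant |Z(K)|+1\geqslant 3$ as soon as $Z(K)\neq 1$.

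The proposition then follows by matching an upper and a lower bound. For the upper bound I construct a partition explicitly: place the identity in $N$ and partition $N$ into $\vartheta_a(N)$ commuting sets, and for each of the $|N|$ conjugates $H^n$ partition $H^n\setminus\{1\}$ into $q(H)=\vartheta_a(H)$ commuting sets; this yields $\vartheta_a(N)+|N|\vartheta_a(H)$ parts. For the lower bound I take an arbitrary abelian partition and group its parts according to the piece containing their nonidentity elements, assigning the identity's part to its piece $P_0$. The parts assigned to $P_0$ form an abelian partition of the whole group $P_0$, contributing at least $\vartheta_a(P_0)$, while the parts assigned to any other piece $P$ partition $P\setminus\{1\}$, contributing at least $q(P)=\vartheta_a(P)$. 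Summing over the $|N|+1$ pieces gives at least $\vartheta_a(N)+|N|\vartheta_a(H)$, independent of where the identity lies.

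The main obstacle is the bridging lemma, and inside it the hypothesis $Z(K)\neq 1$ for the nonabelian pieces. For $N$ this is automatic: a Frobenius kernel is nilpotent, so if $N$ is nonabelian then $Z(N)\neq 1$. For $H$ it relies on the structural fact that a nonabelian Frobenius complement has even order with a central involution --- seen quickly because in a fixed-point-free representation the unique element of order $2$ must act as $-I$ --- so again $Z(H)\neq 1$. As a bonus, these facts show that $N$ and $H$ are never NAP-groups, so $\vartheta_a(N),\vartheta_a(H)\geqslant 1$ and the asserted identity is genuinely a sum of positive integers; no $\vartheta_a=0$ convention is ever triggered.
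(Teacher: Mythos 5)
Your strategy is essentially the paper's own: decompose $G$ into $N$ and the $|N|$ conjugates of $H$, which pairwise meet only in the identity; use facts (a)--(c) to see that every commuting set of size at least $2$ has all its nonidentity elements in a single piece; build the upper bound by transporting a minimal partition of $H$, with the identity deleted from its first part, to each conjugate $H^n$ while keeping a minimal partition of $N$ intact; and get the lower bound by sorting the parts of an arbitrary abelian partition according to the piece containing them. Your bridging lemma ($q(K)=\vartheta_a(K)$) is careful bookkeeping of what the paper compresses into ``it is not difficult to see,'' and in both your argument and the paper's the crux is that the first part of a minimal partition of each piece can be taken to have size at least $3$, which for a nonabelian piece $K$ comes from Lemma \ref{firstsummand} together with $Z(K)\neq 1$.

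There is, however, one genuine error: your justification of $Z(H)\neq 1$ for nonabelian $H$ rests on the claim that a nonabelian Frobenius complement must have even order, and that claim is false. The metacyclic group $H={\Bbb Z}_7\rtimes {\Bbb Z}_9$ of order $63$, where a generator $b$ of ${\Bbb Z}_9$ acts on ${\Bbb Z}_7=\langle a\rangle$ as an automorphism of order $3$, is a nonabelian Frobenius complement of odd order: it satisfies Zassenhaus's criterion for metacyclic fixed-point-free groups (every prime dividing the order $d$ of the action must divide $n/d$; here $d=3$ and $n/d=3$), and concretely, inducing a faithful linear character of the normal subgroup $\langle a, b^3\rangle\cong {\Bbb Z}_{21}$ yields a $3$-dimensional fixed-point-free representation. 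For such a complement your argument says nothing, since there are no involutions at all, so the ``unique involution acts as $-I$'' reasoning only covers the even-order case. The fact you need is nevertheless true in general --- every Frobenius complement has a nontrivial center --- but it requires the structural result the paper cites at exactly this point, namely Satz V.8.18(c) of Huppert (note that in the example above the center is $\langle b^3\rangle$, consistent with that result). Replacing your even-order argument with that citation repairs the proof; everything else in your proposal is correct.
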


\begin{proof} 
Let $G$ be a Frobenius group with Frobenius kernel $N$ and Frobenius complement $H$.  By Thompson's theorem, we know that $N$ is nilpotent, so $Z(N) > 1$, and we have seen that this implies that $N$ is an AP-group.  It is known that every Frobenius complement has a nontrivial center (see Satz V.8.18 (c) of \cite{Hup}), so $H$ is an AP-group.  If $H$ is abelian, then $A_1 (H) = H$ and $|A_1 (H)| = |H| \geqslant 3$.  When $H$ is nonabelian, we may use Lemma \ref{firstsummand} to assume that $Z(H) < A_1 (H)$ and since $|Z(H)| \geqslant 2$, we have $|H| \geqslant 3$..

Then, we have the following decomposition of $G$: 
$$G=\bigcup_{n\in N} H^n\cup N=H^{n_1}\cup H^{n_2}\cup \cdots \cup H^{n_{t}} \cup N, $$
where $n_1=1$ and $t=|N|$. 
Suppose that $$H=A_1(H)\uplus A_2(H)\uplus \cdots \uplus A_r(H),$$
and 
$$N=A_1(N)\uplus A_2(N)\uplus \cdots \uplus A_s(N),$$
are minimal abelian partitions of $H$ and $N$. Note that $1\in A_1(H)\cap A_1(N)$.

We have seen that $|A_1(H)| \geqslant 3$. Put $A_1^\ast(H)=A_1(H)\setminus \{1\}$.  It is then easy to see that
$$G=\bigcup_{i=1}^{t} \big(A_1^\ast(H)^{n_i}\uplus A_2(H)^{n_i}\uplus \cdots \uplus A_r(H)^{n_i}\big)\uplus A_1(N)\uplus A_2(N)\uplus \cdots \uplus A_s(N), $$
is a minimal abelian partition for $G$, and so  $\vartheta_a(G) \geqslant |N|r+s=|N|\vartheta_a(H)+\vartheta_a(N)$.

To see that we obtain an equality, suppose that $B_1 \uplus \cdots \uplus B_m$ is a minimal abelian partition of $G$.  For each $i$, consider $b_i \in B_i$.  (For $i = 1$, we assume $b_1 \neq 1$.)  We know $b_i$ lies in one of $N$ or $H^n$ for some $n \in N$.  If $b_i \in N$, then $B_i \subseteq C_G (b_i) \leqslant N$ and if $b_i \in H^n$, then $B_i \subseteq C_G (b_i) \leqslant H^n$.  It is not difficult to see that we can use the $B_i$'s to obtain abelian partitions of $N$ and of each of the $H^n$'s.  It follows that $m \geqslant |N|r + s$, and this gives the desired equality.
\end{proof}


\section{Simple Groups}

In the next two subsections, we investigate Theorem B. Before proceeding with the study of simple groups $L_2(q)$ and ${\rm Sz}(q)$, we need some additional definitions and notation. The {\em spectrum} $\omega(G)$ of a finite group $G$ is the set of its element orders,
 and the {\em prime spectrum} $\pi (G)$ is the set of all prime divisors of its order.   The set $\omega(G)$ is closed under divisibility and hence is uniquely determined by the set $\mu(G)$ of elements in $\omega(G)$ which are maximal under the divisibility relation. 
 
\subsection {The projective special linear groups $L_2(q)$} 

In this subsection, we assume that $G$ is a group isomorphic to $L_2(q)$, where $q=p^n$, with $p$ a prime and $n$ a positive integer. We are going to show that $G$ is an AP-group and find an explicit formula for $\vartheta_a(G)$. 
We make a few observations before going on to prove anything (see \cite{Dickson} and \cite[Ch. II, \S 8]{Hup}):

\begin{itemize}
\item[{\rm (a)}]   $|G|=q(q-1)(q+1)/d$ and $\mu(G)=\{p, (q-1)/d, (q+1)/d\}$, where $d={\rm gcd}(2, q-1)$.

\item[{\rm (b)}] Let $P$ be a Sylow $p$--subgroup of $G$. Then  $P$ is an elementary abelian $p$--group of order
$q$, which is a TI--subgroup, and  $|N_G(P)|=q(q-1)/d$.

\item[{\rm (c)}] Let $A\subset G$ be a cyclic subgroup of order $(q-1)/d$. Then $A$ is a TI--subgroup and
the normalizer $N_G(A)$ is a dihedral group of order $2(q-1)/d$.

\item[{\rm (d)}] Let $B\subset G$ be a cyclic subgroup of order $(q+1)/d$. Then  $B$ is a TI--subgroup
and the normalizer $N_G(B)$ is a dihedral group of order $2(q+1)/d$.
\end{itemize}

We recall that a subgroup  $H\leqslant G$  is a {\em TI--subgroup} (trivial intersection subgroup) if for every $g\in G$, either $H^g=H$ or $H\cap H^g=1$. 

Let us first handle the case where $L_2 (4)\cong L_2 (5)\cong {\Bbb A}_5$.

\begin{lm}\label{lem2.4}  $\vartheta_a (L_2(4))=\vartheta_a (L_2(5))=\vartheta_a({\Bbb A}_5)=21$.
\end{lm}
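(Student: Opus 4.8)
The plan is to compute $\vartheta_a(G)$ for $G \cong {\Bbb A}_5$ by exploiting the structure of $G$ as an AC-group and applying the machinery already developed, particularly Proposition \ref{AC-groups} and Lemma \ref{minimal}. First I would recall the subgroup data for $L_2(4) \cong L_2(5) \cong {\Bbb A}_5$ from items (a)--(d): here $q = 4$, $d = 1$, so $|G| = 60$, and the maximal cyclic subgroups have orders $p = 2$, $(q-1)/d = 3$, and $(q+1)/d = 5$. The Sylow $2$-subgroup $P$ is elementary abelian of order $4$ (a Klein four-group), the subgroups $A$ of order $3$ and $B$ of order $5$ are cyclic, and each of $P$, $A$, $B$ is a TI-subgroup. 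Since $Z({\Bbb A}_5) = 1$, the centralizer of a noncentral element is determined by which maximal torus contains it, and the TI property forces these centralizers to intersect pairwise trivially. Concretely, $C_G(x) = P$ for any involution $x$, $C_G(y) = A$ for any element of order $3$, and $C_G(z) = B$ for any element of order $5$, so every proper centralizer is abelian and ${\Bbb A}_5$ is an AC-group with all centralizers of order $\geqslant 3$.

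Next I would invoke Proposition \ref{AC-groups}, which gives $\vartheta_a(G) = n(G)$ directly, reducing the problem to computing the maximum size $n(G)$ of a noncommuting subset. Equivalently, by the covering argument in the proof of Proposition \ref{AC-groups}, I want to count the number of distinct maximal abelian subgroups needed so that every element lies in at least one of them; since distinct conjugates of $P$, $A$, $B$ pairwise intersect in the identity, a noncommuting set of maximal size is obtained by picking one nonidentity representative from each such maximal abelian subgroup. To count these, I would use the normalizer orders: the number of Sylow $2$-subgroups is $[G : N_G(P)] = 60/12 = 5$, the number of conjugates of $A$ is $[G : N_G(A)] = 60/6 = 10$, and the number of conjugates of $B$ is $[G : N_G(B)] = 60/10 = 6$. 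These $5 + 10 + 6 = 21$ maximal abelian subgroups each contribute exactly one element to a maximal noncommuting set, yielding $n(G) = 21$ and hence $\vartheta_a(G) = 21$.

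To make the partition explicit and confirm minimality, I would write $G = A_1 \uplus A_2 \uplus \cdots \uplus A_{21}$ where $A_1 = P$ (a full Sylow $2$-subgroup including the identity) and the remaining $20$ parts are the punctured subgroups $C \setminus \{1\}$ as $C$ ranges over the other $4$ conjugates of $P$, the $10$ conjugates of $A$, and the $6$ conjugates of $B$. The TI property guarantees these parts are pairwise disjoint and that their union, together with $\{1\}$, is all of $G$; a size check gives $4 + 4\cdot 3 + 10 \cdot 2 + 6 \cdot 4 = 4 + 12 + 20 + 24 = 60$, confirming a genuine partition. Each part is commuting since it sits inside an abelian subgroup, and each has size $\geqslant 2$ (the smallest being the $A$-parts of size $2$). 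Minimality then follows from Lemma \ref{minimal}, applied with one representative $a_i$ from each maximal abelian subgroup.

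The main obstacle I anticipate is verifying cleanly that the proper centralizers really are exactly $P$, $A$, and $B$ and that these exhaust the maximal abelian subgroups, so that no element is double-counted and the covering by $21$ subgroups is tight. This amounts to checking the element-order distribution in ${\Bbb A}_5$ (that is, $15$ involutions, $20$ elements of order $3$, and $24$ elements of order $5$) against the TI-subgroup structure; the arithmetic is routine but must be reconciled carefully with the counts of conjugate subgroups so that the $21$ is obtained in exactly one way. Since ${\Bbb A}_5$ is an AC-group, the heavy lifting is done by Proposition \ref{AC-groups}, and the remaining work is purely the bookkeeping of conjugacy class sizes and normalizer indices.
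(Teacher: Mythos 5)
Your proposal is correct and follows essentially the same route as the paper: observe that ${\Bbb A}_5$ is an AC-group with all centralizers of order at least $3$, apply Proposition \ref{AC-groups} to reduce to computing $n(G)$, and count one nonidentity element from each of the $5+10+6=21$ pairwise trivially intersecting maximal abelian (Sylow) subgroups. Your extra verification via the explicit partition and Lemma \ref{minimal} is sound but not needed once Proposition \ref{AC-groups} is invoked.
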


\begin{proof}  Let  $G=L_2(4)\cong L_2(5)\cong {\Bbb A}_5$. Observe that $G$ is an AC-group.  It is easy to check that  $G$ contains 1 element of order 1, 15 elements of order 2, 20 elements of order 3, and 24 elements of order 5, and so $\omega(G)=\{1, 2, 3, 5\}$. In particular, the centralizer of every element of order $p$ in $G$ is a Sylow $p$-subgroup, where $p\in \pi(G)$.  Notice that the Sylow subgroups are abelian and have order at least $3$.  Thus, we may use Proposition \ref{AC-groups}, and it suffices to find a maximal noncommuting set.  

Since all of the pairs of Sylow subgroups intersect trivially and are abelian, we see that a maximal noncommuting set will contain one element of each Sylow subgroup. Hence, it will contain six  elements of order $5$. Similarly, it will contain ten elements of order $3$. Finally, it will contain five elements of order $2$. We conclude that a maximal noncommuting set has $6 + 10 + 5 = 21$ elements.  Thus $\vartheta_a(G) = 21$, and the proof is complete.  \end{proof}

Now we handle $L_2(q)$ when $q > 5$ is a prime power.

\begin{lm}\label{lem2.5}   Let $q$ be a prime power. If $q>5$, then $\vartheta_a (L_2(q))=q^2+ q+1$. Furthermore, we have $\vartheta_a (L_2(2))=0$,  $\vartheta_a (L_2(3))=5$ and $\vartheta_a (L_2(4))=\vartheta_a (L_2(5))=21$.
\end{lm}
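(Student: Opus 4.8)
The plan is to give a single argument valid for every prime power $q>5$, reducing everything to the maximal abelian subgroups recorded in (a)--(d). The key observation is that the Sylow $p$-subgroups (order $q$), the conjugates of the split torus $A$ (order $(q-1)/d$), and the conjugates of the nonsplit torus $B$ (order $(q+1)/d$) are all abelian TI-subgroups, and that their nonidentity elements partition $G\setminus\{1\}$. Indeed, since $p$, $(q-1)/d$ and $(q+1)/d$ are pairwise coprime, every nonidentity element of $L_2(q)$ is either a $p$-element (unipotent), lying in a unique Sylow $p$-subgroup, or a $p'$-element (semisimple), lying in a maximal torus; the TI-property from (b)--(d) together with this coprimality forces any two of these subgroups to meet in $\{1\}$. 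From (a) one counts $q+1$ Sylow subgroups, $q(q+1)/2$ conjugates of $A$, and $q(q-1)/2$ conjugates of $B$, for a total of $q^2+q+1$ subgroups. I would confirm that they genuinely partition $G\setminus\{1\}$ by checking that $\sum_M(|M|-1)$, summed over these subgroups, equals $|G|-1$ (a routine computation that works out for both parities of $q$).

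For the upper bound I would convert this partition into an abelian partition of $G$: take $A_1$ to be one fixed Sylow $p$-subgroup (which contains $1$), and let the remaining parts be $M\setminus\{1\}$ as $M$ ranges over the other $q^2+q$ tori and Sylow subgroups. Each part lies inside an abelian subgroup, hence is commuting, and the parts are disjoint and cover $G$. The only point to verify is that every part has size at least $2$; the binding case is a split torus, whose part has size $(q-1)/d-1$, and this is $\geqslant 2$ exactly when $(q-1)/d\geqslant 3$. For $q>5$ this always holds, whereas for $q=5$ one has $(q-1)/d=2$ and the part collapses to a single element --- which is precisely why $q=5$ must be excluded. This gives $\vartheta_a(G)\leqslant q^2+q+1$.

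The lower bound is the delicate step, and I expect it to be the main obstacle: for odd $q>5$ the group $L_2(q)$ is \emph{not} an AC-group, since the centralizer of an involution is a nonabelian dihedral group, so Proposition \ref{AC-groups} cannot be applied directly. Instead I would build a noncommuting set of size $q^2+q+1$ by hand, choosing one unipotent element from each Sylow $p$-subgroup and one element of order $\geqslant 3$ (a generator, which is regular and therefore has centralizer equal to its torus) from each maximal torus. Here $q>5$ is used again, as it guarantees every torus has order $\geqslant 3$ and hence a noninvolution generator; avoiding involutions is exactly what makes the centralizers behave. Using that the centralizer of a unipotent element is its Sylow subgroup, that the centralizer of a regular semisimple element is its torus, and that distinct members of the family meet in $\{1\}$, I would verify that no two chosen elements commute. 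This yields $n(G)\geqslant q^2+q+1$, and since $\vartheta_a(G)\geqslant n(G)$ always holds, we get $\vartheta_a(G)\geqslant q^2+q+1$. Combining the two bounds gives $\vartheta_a(L_2(q))=q^2+q+1$.

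Finally I would settle the small cases separately, where the torus and Sylow sizes degenerate. The group $L_2(2)\cong {\Bbb S}_3$ is the dihedral group of order $6$, which is a NAP-group by \cite{FH}, so $\vartheta_a(L_2(2))=0$. The group $L_2(3)\cong {\Bbb A}_4$ is an AC-group all of whose noncentral centralizers have order at least $3$ (its involutions have centralizer $V_4$ of order $4$, and its elements of order $3$ have centralizer of order $3$), so Proposition \ref{AC-groups} applies and gives $\vartheta_a=n=5$. The coincidence $L_2(4)\cong L_2(5)\cong{\Bbb A}_5$ was already handled in Lemma \ref{lem2.4}, yielding $21$. The hardest part throughout is the noncommuting-set construction in the lower bound, precisely because the nonabelian involution centralizers for odd $q$ obstruct a uniform AC-group argument and force the careful, involution-avoiding choice of torus representatives.
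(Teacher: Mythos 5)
Your proposal is correct and follows essentially the same route as the paper: the same decomposition of $G\setminus\{1\}$ into the nonidentity elements of the $q+1$ Sylow $p$-subgroups, the $q(q+1)/2$ conjugates of $A$ and the $q(q-1)/2$ conjugates of $B$, the same size check using $q>5$ (with the split torus as the binding case), and the same small-case reductions via Lemma \ref{lem2.4}. The only difference is cosmetic: for the lower bound you build an explicit noncommuting set of one unipotent per Sylow subgroup and one generator per torus and invoke $\vartheta_a(G)\geqslant n(G)$, whereas the paper invokes Lemma \ref{minimal} with exactly the same centralizer facts ($C_G(x)=P$ for $1\neq x\in P$, $C_G(a)=A$ for a generator $a$), and the proof of that lemma is the same pigeonhole argument you are using.
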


\begin{proof}  Let $G=L_2(q)$. Since $L_2(2)\cong {\Bbb S}_3\cong {\Bbb Z}_3\rtimes {\Bbb Z}_2$ and $L_2(3)\cong {\Bbb A}_4\cong ({\Bbb Z}_2\times{\Bbb Z}_2)\rtimes {\Bbb Z}_3$, it is easy to see that  $\vartheta_a (L_2(2))=0$ and $\vartheta_a (L_2(3))=5$.  Moreover, since $L_2(4)\cong L_2(5)\cong {\Bbb A}_5$,  it follows from Lemma \ref{lem2.4} that  
$\vartheta_a (L_2(4))=\vartheta_a (L_2(5))=21$. Hence we may assume that $q>5$. 

We note that if $q$ is a power of $2$, then $G$ will be an AC-group and we could appeal to Proposition \ref{AC-groups}.  When $q$ is odd, this is not true, and we are to going to present a unified argument that works for all $q$.  As already mentioned,  $G$ contains abelian subgroups
 $P$, $A$ and $B$, of orders $q$, $(q-1)/d$ and $(q+1)/d$, respectively, every distinct pair of their conjugates 
 intersects trivially, and every element
of $G$ is a conjugate of an element in $P\cup A\cup B$. 
Let $$G=N_Pu_1\cup \cdots \cup N_Pu_r=N_Av_1\cup \cdots \cup N_Av_s= N_Bw_1\cup \cdots \cup N_Bw_t,$$
be coset decompositions of $G$ by $N_P=N_G(P)$, $N_A=N_G(A)$ and $N_B=N_G(B)$, where
  $r=[G:N_P]=q+1$, $s=[G:N_A]=q(q+1)/2$  and  $t=[G:N_B]=(q-1)q/2$.
Then, we have
$$G=P^{u_1}  \cup \cdots \cup P^{u_r}\cup A^{v_1}\cup \cdots \cup A^{v_s}\cup B^{w_1} \cup \cdots \cup B^{w_t}. $$
Put 
$$P_i=P^{u_i}\setminus \{1\} \ (1\leqslant i\leqslant r), \ \ A_j=A^{v_j}\setminus \{1\} \ (1\leqslant j\leqslant s), \ \   B_k=B^{w_k}\setminus \{1\} \ (1\leqslant k\leqslant t). $$
Then, since 
$$\begin{array}{lll} 
|P_i|=q-1>1, & &  i=1, 2, \ldots, r,\\[0.2cm]
|A_j|=(q-1)/d-1\geqslant (q-1)/2-1>1, & &  j=1, 2, \ldots, s,\\[0.2cm]
|B_k|=(q+1)/d-1\geqslant (q+1)/2-1> 1, & &  k=1, 2, \ldots, t,
\end{array}$$
and $\mu(G)=\{p, (q-1)/d, (q+1)/d\}$, we see that $$G\setminus \{1\}=P_1  \cup \cdots \cup P_r\cup A_1\cup \cdots \cup A_s\cup B_1 \cup \cdots \cup B_t,$$
leads to an abelian partition of $G$. 

For $i = 1, \dots, r$, pick $x_i \in P_i $, for $j = 1, \dots, s$, set $A^{v_j} = \langle a_j \rangle$, and for $k = 1, \dots, t$, set $B^{w_k} = \langle b_k \rangle$.  Note that $C_G(x_i)=P_i \cup \{1 \}$ for $i = 1, \dots, r$; $C_G (a_j)=A_j \cup \{1 \}$ for $j= 1, \dots, s$, and $C_G (b_k) = B_k \cup \{ 1 \}$.  We now may apply Lemma \ref{minimal} to see that  $$P_1  \cup \cdots \cup P_r\cup A_1\cup \cdots \cup A_s\cup B_1 \cup \cdots \cup B_t$$ is a minimal abelian partition of $G$.  
Therefore, we get
$$\vartheta_a (G)=r+s+t=q+1+q(q+1)/2+(q-1)q/2=q^2+q+1.$$
This completes the proof. \end{proof}


\subsection{The Suzuki simple groups ${\rm Sz}(q)$}
Assume now that $G$ is a group isomorphic to Suzuki group ${\rm Sz}(q)$, where $q=2^{2n+1}\geqslant 8$. We start by recalling some well known facts about the simple group $G$ (see \cite{Suzuki2, Suzuki}):
\begin{itemize}
\item[{\rm (a)}]  $|G|= q^2(q-1)(q^2+1)= q^2(q-1)(q-r+1)(q+r+1)$, where $r=2^{n+1}$. Note that  these factors are mutually coprime and play an important role in the structure of the subgroups of $G$.
Moreover, we have $\mu(G)=\{4, q-1, q-r+1, q+r+1\}$.

\item[{\rm (b)}] Let $P$ be a Sylow $2$-subgroup of $G$.
Then  $P$ is a $2$-group of order
$q^2$ with ${\rm exp}(P)=4$, which is a TI-subgroup, and  $N_G(P)$ is a Frobenius groups 
of order $q^2(q-1)$.

\item[{\rm (c)}] Let $A\subset G$ be a cyclic subgroup of order $q-1$. Then $A$ is a TI-subgroup and
the normalizer $N_G(A)$ is a dihedral group of order $2(q-1)$.

\item[{\rm (d)}] Let $B\subset G$ be a cyclic subgroup of order $q-r+1$. Then $B$ is a TI-subgroup and
the normalizer $N_G(B)$  has order $4(q-r+1)$.

\item[{\rm (e)}] Let $C\subset G$ be a cyclic subgroup of order $q+r+1$. Then $C$ is a TI-subgroup and
the normalizer $N_G(C)$ has order $4(q+r+1)$.
\end{itemize}

We now show that $G={\rm Sz} (q)$ is an AP-group and we compute $\vartheta_a (G)$.

\begin{lm} \label{lem2.6}  We have  $\vartheta_a ({\rm Sz}(q))=q^4+q^3-q^2+q-1$, where $q=2^{2n+1}\geqslant 8$.
 
\end{lm}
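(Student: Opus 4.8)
The plan is to mimic the structure of the proof of Lemma \ref{lem2.5} for $L_2(q)$, since Suzuki groups have the same essential feature: every element lies in a conjugate of one of a small collection of abelian TI-subgroups, and distinct conjugates of these subgroups intersect trivially. First I would record that $G = {\rm Sz}(q)$ has four classes of relevant subgroups: the Sylow $2$-subgroup $P$ (of order $q^2$) and the cyclic subgroups $A$, $B$, $C$ of orders $q-1$, $q-r+1$, $q+r+1$. The crucial structural point I would extract from facts (b)--(e) is that, apart from the identity, every nonidentity element of $G$ centralizes a well-understood abelian subgroup. A subtlety here, not present in the $L_2(q)$ case, is that $P$ is a nonabelian $2$-group (exponent $4$), so it is \emph{not} itself a commuting set; the elements of $P$ do not all pairwise commute, and so $P$ cannot serve directly as a single part of the partition the way the abelian Sylow subgroups did for $L_2(q)$.

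The heart of the argument will be to obtain a minimal abelian partition of $P$ itself and then spread it across the $[G:N_G(P)] = q^2+1$ conjugates of $P$. I would use the Frobenius structure: $N_G(P)$ is a Frobenius group of order $q^2(q-1)$ with kernel $P$ and a complement of order $q-1$, and since $P$ is a $2$-group with $Z(P)$ of order $q$, the centralizer structure inside $P$ is governed by the center. In fact $P$ is a Suzuki $2$-group whose centralizers of noncentral elements all equal $Z(P)$, so $P$ is (essentially) an AC-group with abelian center-sized centralizers; this lets me compute $\vartheta_a(P)$ (one expects $\vartheta_a(P) = q^2 - q + 1$, coming from one part of size $q$ for the center together with the $q^2 - q$ remaining elements paired off, or more precisely from covering the $q$-element cosets). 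I would pin down $\vartheta_a(P)$ exactly and verify via Lemma \ref{minimal} (or directly via Proposition \ref{AC-groups} applied to $P$) that it is minimal.

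Next I would assemble the global count. Writing the conjugates as
$$G \setminus \{1\} = \bigcup_{i} \big(P^{u_i}\setminus\{1\}\big) \cup \bigcup_j \big(A^{v_j}\setminus\{1\}\big)\cup \bigcup_k \big(B^{w_k}\setminus\{1\}\big)\cup \bigcup_\ell \big(C^{w_\ell}\setminus\{1\}\big),$$
I partition each $P^{u_i}$ into $\vartheta_a(P)$ commuting pieces while the cyclic conjugates each contribute a single part, reserving the identity for the first part as usual. The indices range over $[G:N_G(P)]=q^2+1$, $[G:N_G(A)]=q^2(q^2+1)/2$, $[G:N_G(B)]=q^2(q-1)(q+r+1)/4$, and $[G:N_G(C)]=q^2(q-1)(q-r+1)/4$ conjugates respectively, each abelian piece having size $\geqslant 2$. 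Summing, the number of parts is $(q^2+1)\vartheta_a(P)$ from the Sylow conjugates plus the three cyclic contributions, and I would check that this total simplifies to $q^4+q^3-q^2+q-1$. I then invoke Lemma \ref{minimal} (after noting each $a_j$, $b_k$, $c_\ell$ has abelian self-centralizing TI-subgroup as centralizer, and treating the $P$-conjugate parts separately via the minimality of $\vartheta_a(P)$) to conclude the partition is minimal.

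The main obstacle I anticipate is the treatment of the nonabelian Sylow $2$-subgroup $P$: unlike the $L_2(q)$ case where every covering subgroup was abelian, here I must first establish the exact value of $\vartheta_a(P)$ and argue that distributing a minimal partition of $P$ across its conjugates yields a \emph{globally} minimal partition, i.e. that no abelian part of $G$ can do better by straddling a $P$-conjugate and its complement. The key fact making this work is that $P$ is a TI-subgroup and $C_G(x) \leqslant P^{u_i}$ for every noncentral $x \in P^{u_i}$, so any commuting set meeting $P^{u_i}\setminus\{1\}$ nontrivially in a noncentral element is confined to that conjugate; verifying this containment carefully, and handling the central elements of the $P^{u_i}$ (which the identity and the structure of $N_G(P)$ interact with), is where the real care is needed.
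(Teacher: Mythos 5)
Your global framework coincides with the paper's: decompose $G$ into the TI-conjugates of $P$, $A$, $B$, $C$, give the (abelian) cyclic conjugates one part each, give each conjugate of $P$ a minimal abelian partition, and get global minimality from the fact that every centralizer of a nontrivial element is confined to a single conjugate. Your conjugate counts $[G:N_G(P)]=q^2+1$, $[G:N_G(A)]=q^2(q^2+1)/2$, $[G:N_G(B)]=q^2(q-1)(q+r+1)/4$, $[G:N_G(C)]=q^2(q-1)(q-r+1)/4$ are also the paper's. However, there is a genuine error at exactly the step you yourself single out as the main obstacle: the centralizer structure inside $P$, and hence $\vartheta_a(P)$. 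Your claim that ``centralizers of noncentral elements all equal $Z(P)$'' cannot be right (a noncentral $x$ lies in $C_P(x)$ but not in $Z(P)$), and your guessed value $\vartheta_a(P)=q^2-q+1$ is wrong. The correct fact, which the paper pulls from Huppert--Blackburn, is that the elements of order $4$ in ${\rm Sz}(q)$ fall into exactly \emph{two} conjugacy classes; counting then gives $|C_G(x)|=2|Z(P)|=2q$ for noncentral $x\in P$, so $C_G(x)=\langle Z(P),x\rangle = Z(P)\cup xZ(P)$, an abelian group. Thus $P$ is an AC-group whose noncentral centralizers are the $q-1$ subgroups $Z(P)\langle x\rangle$, and (merging $Z(P)$ into one of them as in Lemma \ref{minimal}) one gets $\vartheta_a(P)=q-1$, not $q^2-q+1$.

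This is not a cosmetic slip: the final arithmetic hinges on it. With the correct value, the Sylow conjugates contribute $(q-1)(q^2+1)=q^3-q^2+q-1$ parts, the cyclic conjugates contribute $q^2(q^2+1)/2+q^2(q-1)(q+r+1)/4+q^2(q-1)(q-r+1)/4=q^4$ parts, and the sum is $q^4+q^3-q^2+q-1$ as claimed. With your value, the Sylow conjugates would contribute $(q^2+1)(q^2-q+1)$ parts and the total would be $2q^4-q^3+2q^2-q+1$, so the proof as proposed fails to verify the formula. (Your heuristic for $q^2-q+1$ is also internally inconsistent: under your hypothesis the $q^2-q$ noncentral elements could not even be placed in parts of size at least $2$, since only $q$ central elements are available as mates.) Once the two-class fact for elements of order $4$ is in hand, the rest of your outline --- including the confinement argument $C_G(x)\leqslant P^{u_i}$ for global minimality --- goes through essentially as the paper does it.
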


\begin{proof} Let $G={\rm Sz}(q)$, where $q=2^{2n+1}\geqslant 8$. We apply (a)-(e) using the notation given there. As was mentioned before,  $G$ contains a Sylow 2-subgroup $P$ of order $q^2$ and cyclic subgroups $A$, $B$, and $C$, of orders
$q-1$, $q-r+1$ and  $q+r+1$, respectively.
Moreover, every two  distinct conjugates of them intersect trivially and every element
of $G$ is a conjugate of an element in $P\cup A\cup B\cup C$.

First we consider a Sylow $2$-subgroup $P$ of $G$.  By Theorem VIII.7.9 of \cite{hupII} and Lemma XI.11.2 of \cite{hupIII}, $Z(P)$ is an elementary abelian $2$-group of order $q$ and every element outside $Z(P)$ has order $4$.  Observe that $P$ is the centralizer in $G$ of all of the nontrivial elements of $Z(P)$.  If $x \in P \setminus Z(P)$, then $\langle Z(P), x \rangle \leqslant C_G (x)$.  In the proof of Lemma XI.11.7 of \cite{hupIII}, we see that the elements of order $4$ in $G$ lie in two conjugacy classes. This implies that $|C_G (x)| = 2 |Z(P)|$, from which we deduce that $C_G (x) = \langle Z(P), x \rangle$. In particular, $C_G(x)$ is abelian.
Therefore,
for all $x, y\in P\setminus Z(P)$ either $C_G(x) = C_G(y)$ or $C_G(x)\cap C_G(y) = Z(P)$.
Hence, $Z(P)\cup \{C_G(x)\setminus Z(P) \mid x\in P\setminus Z(P)\}$ forms an abelian partition of $P$. 
Computation yields $\vartheta_a (P)=q-1$.

Next, by the proof of Lemma XI.11.6 in \cite{hupIII}, we see that the cyclic subgroups $A$, $B$, and $C$,  are the centralizers of their nonidentity elements.
Let
$$G = N_Px_1\cup \cdots \cup N_Px_p
=  N_{A}y_1\cup \cdots \cup N_{A}y_a
=  N_{B}z_1\cup \cdots \cup N_{B}z_b
 =  N_{C}t_1\cup \cdots \cup N_{C}t_c,$$
be coset decompositions of $G$ by  $N_P=N_G(P)$, $N_A=N_G(A)$, $N_B=N_G(B)$ and $N_C=N_G(C)$, where

$$\begin{array}{lll}
p & = & [G:N_P]=q^2+1,\\[0.2cm]
a & = & [G:N_A]=q^2(q^2+1)/2,\\[0.2cm] 
b& = & [G:N_B]=q^2(q-1)(q+r+1)/4, \  \mbox{and} \\[0.2cm] 
c&=&[G:N_C]=q^2(q-1)(q-r+1)/4.\end{array}$$
Then, we have
$$G=P^{x_1}  \cup \cdots \cup P^{x_p}\cup A^{y_1}\cup \cdots \cup A^{y_a}\cup  B^{z_1} \cup \cdots \cup B^{z_b}\cup C^{t_1} \cup \cdots \cup C^{t_c}.$$  

Observe that $A$, $B$, and $C$ are centralizers of nonidentity elements, and we have seen that $P$ hass an abelian partition by  centralizers.  Hence, the abelian partition given here consists of centralizers of nonidentity elements of $G$.  Thus, we may use Lemma \ref{minimal} to see that it is a minimal abelian partition. 
A straightforward computation now shows that
$$\vartheta_a (G)=(q-1)(q^2+1)+q^2(q^2+1)/2+q^2(q-1)(q+r+1)/4+q^2(q-1)(q-r+1)/4.$$
After some simplification this leads to
$\vartheta_a (G)=q^4+q^3-q^2+q-1$, as required. 
 \end{proof}


\section{On NAP-groups}

In any group $G$, an element of order $2$ is 
customarily called an {\em involution} and we denote by 
${\rm Inv} (G)$ the set of all involutions of $G$.  
Let $G=G_1\times G_2\times \cdots \times G_n $ be a 
direct product of nontrivial groups $G_i$ of even order. 
Following \cite{FH}, we write 
$${\rm Di}(G):={\rm Inv}(G_1)\times {\rm Inv}(G_2)\times \cdots \times {\rm Inv}(G_n)\subset {\rm Inv}(G),$$ 
and 
$${\rm Dm}(G):=\left(\bigcup_{x\in {\rm Di}(G)}C_G(x)\right)\setminus {\rm Di}(G).$$
The elements of ${\rm Di}(G)$ and  ${\rm Dm}(G)$ are called {\em diagonal involutions} 
and  {\em diagonal mates}, respectively.

Recall that an element $x\in G$ is said to be 
{\em self-centralizing} if $C_G(x)=\langle x\rangle$.
The structure of a finite group containing a 
self-centralizing {\em involution} has been known for a long time. 
In fact, such a group is isomorphic to a 
semidirect product of a non-trivial abelian group 
$A$ of odd order and an involution 
$u$  acts on $A$ by inversion 
(see for instance \cite[Proposition 4.2]{BBW}).  
In other words, $G$ is a Frobenius group 
with abelian kernel $A$ and complement isomorphic to 
${\Bbb Z}_2$. We give here another proof of this result to make the paper
self-contained.

\begin{theorem} 
If $G$ is a group with a
self-centralizing involution, then $G$ is a Frobenius group whose
Frobenius complement has order $2$.
\end{theorem}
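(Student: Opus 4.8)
The plan is to prove the statement by verifying, directly from the definition of Frobenius complement recalled above, that the order-$2$ subgroup $H=\langle u\rangle$ generated by the self-centralizing involution $u$ is itself a Frobenius complement. By hypothesis $C_G(u)=\langle u\rangle$, so $|H|=2$; as long as $|G|>2$ this is a nontrivial proper subgroup, and I would dispose of the trivial case $|G|=2$ at the outset by noting that the self-centralizing hypothesis forces $G$ to be nonabelian once $|G|>2$ (if $G$ were abelian with $|G|>2$ then $C_G(u)=G\ne\langle u\rangle$). Thus the whole content reduces to checking that $H\cap H^{g}=1$ for every $g\in G\setminus H$.

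To carry this out, first I would record that $H$ is self-normalizing: if $g$ normalizes $H$, then $u^{g}$ is an involution lying in $H=\{1,u\}$, hence $u^{g}=u$ and $g\in C_G(u)=H$, so $N_G(H)=H$. Then, for an arbitrary $g\in G\setminus H = G\setminus C_G(u)$, the conjugate $u^{g}$ cannot equal $u$ (otherwise $g\in C_G(u)=H$). Since $H=\{1,u\}$ and $H^{g}=\{1,u^{g}\}$ are distinct subgroups of order $2$, their intersection is $\{1\}$. This is precisely the condition defining a Frobenius complement, so $G$ is a Frobenius group and $H$ is a complement of order $2$, which is the assertion.

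For the theorem exactly as stated there is really no hard step: the single idea is that the self-centralizing hypothesis makes $C_G(u)=\langle u\rangle$ play simultaneously the role of the normalizer of $\langle u\rangle$ and the guarantee that any $g$ outside $\langle u\rangle$ moves $u$ to a different involution. The only point meriting a sentence of care is the degenerate case $|G|=2$, where $H=G$ is not proper.

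If instead one wanted the full classical conclusion quoted before the theorem -- that the kernel is abelian of odd order and that $u$ inverts it -- then the difficulty migrates there. In that case I would analyze the map $f\colon G\to G$, $f(g)=g^{-1}g^{u}$ (with $g^{u}=u^{-1}gu$). A direct computation gives $f(ug)=f(g)$, and $f(g)=f(h)$ implies $hg^{-1}\in C_G(u)=H$, so the fibres of $f$ are exactly the pairs $\{g,ug\}$ and the image $N=f(G)$ has size $|G|/2$. Every $n=f(g)$ satisfies $n^{u}=n^{-1}$, and no conjugate of $u$ lies in $N$ (an involution inverted by $u$ would centralize $u$, forcing it to be $u$, but $u\notin N$), so that $G=N\uplus u^{G}$, where $u^{G}$ is the conjugacy class of $u$ (also of size $|G|/2$). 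The genuinely delicate step -- the one I expect to be the main obstacle for this stronger version -- is showing that $N$ is an abelian subgroup: since closure of $N$ under products is equivalent to the commuting of elements inverted by $u$, one must prove that any two $u$-inverted elements commute, and only then does $N$ emerge as the abelian odd-order Frobenius kernel.
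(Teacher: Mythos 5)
Your proof is correct, but it takes a genuinely different route from the paper's. You verify the definition of a Frobenius complement directly: since $H=\langle u\rangle$ has order $2$, the intersection $H\cap H^{g}$ is either trivial or all of $H$, and the latter forces $u^{g}=u$, i.e.\ $g\in C_G(u)=H$; so $H\cap H^{g}=1$ for every $g\notin H$, and $H$ is a Frobenius complement of order $2$. The paper instead runs a structural argument: it first shows that a Sylow $2$-subgroup $T$ containing $x$ equals $\langle x\rangle$ (via $Z(T)\leqslant C_G(x)=\langle x\rangle$, hence $x\in Z(T)$, hence $T\leqslant C_G(x)$), then invokes Burnside's normal $p$-complement theorem to produce $N=O_{2'}(G)$ with $G=N\langle x\rangle$, and finally observes that $x$ acts fixed-point-freely on $N$, so $G$ is Frobenius with kernel $N$ and complement $\langle x\rangle$. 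Your approach is more elementary -- no Sylow analysis, no transfer-based complement theorem -- and it is the shortest path to the statement as the paper defines ``Frobenius group'' (existence of a complement). What the paper's route buys is the kernel itself: it exhibits $N=O_{2'}(G)$ explicitly as a normal subgroup of odd order and index $2$, without having to invoke Frobenius's theorem to recover the kernel from the complement, and this feeds directly into the fuller classical description (abelian kernel inverted by $u$) discussed just before the theorem. Your handling of the degenerate case $|G|=2$ is a point in your favor: there $\langle u\rangle=G$ is not proper and the theorem fails as literally stated, an issue the paper's proof silently skips (its $N$ would be trivial). Your closing paragraph correctly locates where the real difficulty would lie in the stronger classical statement (commutativity of the $u$-inverted elements), but that is not needed for the theorem as posed.
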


\begin{proof}
Let $G$ be a group and let $x$ in $G$ be an involution 
so that $C_G (x)=\langle x\rangle$. 
We first see that a Sylow $2$-subgroup of $G$ has 
order $2$.  To see this, 
let $T$ be a Sylow $2$-subgroup of $G$ containing $x$.  
Observe that $Z(T) \leqslant C_G 
(x) = \langle x \rangle$, so $Z(T) = \langle x \rangle$.  Since $x\in Z(T)$, 
we see $T \leqslant C_G (x) = \langle x \rangle$.  Therefore, $T = \langle x \rangle$ has order $2$.

We may now apply the Burnside normal $p$-complement theorem to see that this implies that $G$ has a normal $2$-complement  (see Satz IV.2.8 of \cite{Hup} for example.).  
Let $N=O_{2'} (G)$.  
We see that $G = 
N\langle x\rangle$.  Since 
$C_G(x) = \langle x\rangle$, 
$x$ acts fixed-point-free on $N$  and so $G$ is a 
Frobenius group with Frobenius kernel $N$ and Frobenius 
complement $\langle x\rangle$.
\end{proof}

In \cite{FH}, it is shown that  a group with a self-centralizing 
involution is a NAP-group. In particular, the dihedral group $D_{2k}$ of order $2k$, with $k\geqslant 3$ odd,  is a NAP-group. This result also derives from a slightly more general result (see Theorem 4.5 in \cite{FH}):  If $G=D_{2k}\times D_{2k}\times \cdots \times D_{2k}$, the direct product $t$ times, where $k$ is odd and $(k+1)^t -2k^t < 0$, then $G$ is a NAP-group.
A more general result of this type is the following theorem. 

\begin{theorem} \label{NAP-Dihedrals} If  
$G=D_{2k_1}\times  D_{2k_2}\times \cdots \times D_{2k_t}$, 
where $k_1, k_2, \ldots, k_t$ are  odd (not necessarily distinct) and 
$$(k_1+1)(k_2+1) \cdots (k_t+1)-2k_1k_2\cdots k_t<0,$$ then $G$ is a NAP-group.
\end{theorem}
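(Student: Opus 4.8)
The plan is to reduce the statement to a single cardinality inequality between the set of diagonal involutions ${\rm Di}(G)$ and the set of diagonal mates ${\rm Dm}(G)$ defined above. Specifically, I would show that any abelian partition of $G$ forces $|{\rm Dm}(G)| \geqslant |{\rm Di}(G)|$, and then compute both sides to see that the hypothesis is exactly the opposite strict inequality; the two cannot coexist, so $G$ is a NAP-group.

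First I would record the structure of the factors. Because each $k_i$ is odd, the center of $D_{2k_i}$ is trivial, the involutions of $D_{2k_i}$ are precisely its $k_i$ reflections, and from $s r^j s = r^{-j}$ one checks that two distinct reflections never commute (their product is a nontrivial rotation); hence the centralizer of a reflection $s$ is $\{1,s\}$. Passing to the product, a diagonal involution is a tuple $x=(s_1,\dots,s_t)$ of reflections, so $|{\rm Di}(G)| = k_1 k_2 \cdots k_t$, its centralizer is $C_G(x)=\{1,s_1\}\times\cdots\times\{1,s_t\}$, and two diagonal involutions commute coordinatewise, hence commute if and only if they are equal. (Note the hypothesis forces every $k_i \geqslant 3$, so each factor is genuinely nonabelian.)

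Next I would run the partition argument. Suppose $G=A_1\uplus\cdots\uplus A_m$ is an abelian partition. Since distinct diagonal involutions fail to commute, each part contains at most one element of ${\rm Di}(G)$, so exactly $|{\rm Di}(G)|$ of the parts contain a diagonal involution. Each such part has at least two elements, so it contains some $g$ other than its diagonal involution $x$; as $g$ commutes with $x$ we have $g\in C_G(x)\subseteq \bigcup_{y\in {\rm Di}(G)}C_G(y)$, and $g\notin {\rm Di}(G)$ (the part already holds the unique diagonal involution $x$), whence $g\in {\rm Dm}(G)$. Disjointness of the parts makes these mates distinct, giving $|{\rm Dm}(G)|\geqslant |{\rm Di}(G)|$.

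Finally I would compute $|{\rm Dm}(G)|$. An element of $G$ lies in $\bigcup_{x}C_G(x)$ exactly when each coordinate is either the identity or a reflection, since any such tuple is centralized by a suitable diagonal involution; therefore $|\bigcup_x C_G(x)| = \prod_i (k_i+1)$, and as ${\rm Di}(G)\subseteq \bigcup_x C_G(x)$ we get $|{\rm Dm}(G)| = \prod_i(k_i+1)-\prod_i k_i$. The necessary condition above then reads $\prod_i(k_i+1)\geqslant 2\prod_i k_i$, the exact negation of the hypothesis, completing the contradiction. The main obstacle I anticipate is the bookkeeping rather than any deep point: pinning down $\bigcup_x C_G(x)$ precisely and verifying that the partition argument allocates one distinct diagonal mate to each of the $|{\rm Di}(G)|$ relevant parts (taking care that the identity is itself counted as a mate and can be used by only one part).
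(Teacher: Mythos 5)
Your proposal is correct and takes essentially the same approach as the paper's proof: count the diagonal involutions ${\rm Di}(G)$ and the diagonal mates ${\rm Dm}(G)$, observe that an abelian partition would force $|{\rm Dm}(G)|\geqslant |{\rm Di}(G)|$ because each part holding one of the pairwise noncommuting diagonal involutions needs its own distinct mate, and then note this contradicts the hypothesis $(k_1+1)\cdots(k_t+1)-2k_1\cdots k_t<0$. You simply spell out details the paper leaves as ``direct calculation'' (the dihedral centralizer structure and the identification of $\bigcup_{x\in {\rm Di}(G)}C_G(x)$ with the tuples whose coordinates are trivial or reflections), so no substantive difference.
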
 
\begin{proof}  Let $G_i=D_{2k_i}$, where $k_i$ is odd $(i=1, 2, \ldots, t)$. 
Suppose  $G$ is isomorphic to 
the direct product $G_1\times G_2\times \cdots\times G_t$.  
First, we claim that $|{\rm Di}(G)|>|{\rm Dm}(G)|$.  
The proof of the claim requires
some calculations. First, we have  
$$|{\rm Di}(G)|=\prod_{i=1}^{t}|{\rm Inv}(G_i)|=k_1k_2\cdots k_t.$$ 
We now compute $|{\rm Dm}(G)|$. Suppose $x=(x_1, x_2, \ldots, x_t)\in {\rm Di}(G)$ 
is a diagonal involution in $G$. 
It is then easy to see that
$$C_G(x)=C_{G_1}(x_1)\times C_{G_2}(x_2)\times \cdots \times C_{G_t}(x_t).$$
In fact, we want to count the elements $g\in C_G(x)\setminus {\rm Di}(G)$,  
when $x$ ranges over ${\rm Di}(G)$, or equivalently, when
$x_i$ ranges over ${\rm Inv}(G_i)$. Now a direct calculation shows that
$$|{\rm Dm}(G)|=(k_1+1)(k_2+1)\cdots (k_t+1)-|{\rm Di}(G)|.$$
Since $(k_1+1)(k_2+1) \cdots (k_t+1)-2k_1k_2\cdots k_t<0$ by 
assumption, it follows $|{\rm Di}(G)|>|{\rm Dm}(G)|$, as claimed.

Next, assume to the contrary that $G$ is an AP-group. 
Since the set of diagonal involutions of $G$ is a noncommuting set, 
we put 
each of these diagonal involutions in a disjoint commuting subset of the abelian partition. 
But then, at least $|{\rm Di}(G)|-|{\rm Dm}(G)|>0$ 
commuting sets will only contain one of these diagonal involutions, 
a contradiction. So $G$ is a NAP-group.
\end{proof}

As the following corollary shows, we can find a NAP-group $G$ for which $|C_G (x)|$ is ``large'' for every element $x \in G$.

\begin{corollary} For any integer $m\geqslant 1$, there exist 
NAP-groups $G$ satisfying $|C_G(x)|>m$, for every $x$ in $G$.
\end{corollary}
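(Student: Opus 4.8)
The plan is to realize the required groups as direct powers of a single dihedral group and then invoke Theorem \ref{NAP-Dihedrals}. Specifically, for a fixed odd integer $k$ and a fixed number of factors $t$, I would take
$$G=\underbrace{D_{2k}\times D_{2k}\times \cdots \times D_{2k}}_{t},$$
which by Theorem \ref{NAP-Dihedrals} (with all $k_i=k$) is a NAP-group precisely when $(k+1)^t-2k^t<0$. The two conditions to arrange simultaneously are that this NAP-inequality holds and that every centralizer in $G$ has order exceeding $m$.

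For the centralizer condition I would use that centralizers in a direct product split across the coordinates: for $x=(x_1,\dots,x_t)$ we have $C_G(x)=C_{D_{2k}}(x_1)\times\cdots\times C_{D_{2k}}(x_t)$, so that $|C_G(x)|=\prod_{i=1}^{t}|C_{D_{2k}}(x_i)|$. Since every centralizer in $D_{2k}$ contains at least the cyclic subgroup generated by the element (and equals the whole group when the element is trivial), each factor has order at least $2$; the minimum value $2^t$ is attained exactly when every coordinate is a reflection. Hence it suffices to choose $t$ with $2^t>m$, and then $|C_G(x)|\geqslant 2^t>m$ for every $x\in G$.

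The remaining step is to satisfy the NAP-inequality for this chosen $t$. Rewriting $(k+1)^t-2k^t<0$ as $(1+1/k)^t<2$, I observe that for fixed $t$ the left-hand side tends to $1$ as $k\to\infty$, so the inequality holds for all sufficiently large $k$; concretely, any odd $k>1/(2^{1/t}-1)$ works. Fixing $t$ first (so that $2^t>m$) and then selecting such an odd $k$ yields a group $G=D_{2k}^{\,t}$ that is simultaneously a NAP-group and satisfies $|C_G(x)|\geqslant 2^t>m$ for every $x\in G$, which completes the argument.

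This is really a matter of reconciling two constraints rather than overcoming a genuine obstacle: the number of factors $t$ must be large to push all centralizers above $m$, while $k$ must be large relative to $t$ to preserve the NAP-inequality. The only point requiring care is the order of the quantifiers — one must fix $t$ \emph{before} choosing $k$ — since the NAP-inequality can always be restored by enlarging $k$ once $t$ is pinned down, whereas increasing $t$ for a fixed $k$ would eventually violate it.
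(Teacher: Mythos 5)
Your proposal is correct and is essentially the paper's own argument: the paper's proof simply says to take $t$ with $2^t>m$ and apply Theorem \ref{NAP-Dihedrals}, leaving implicit exactly the details you supply (that centralizers in the direct product $D_{2k}^{\,t}$ factor coordinatewise and hence have order at least $2^t$, and that the inequality $(k+1)^t-2k^t<0$ can be achieved by choosing $k$ large after fixing $t$). Your remark about the quantifier order — fixing $t$ before $k$ — is a worthwhile clarification of a point the paper glosses over.
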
 

\begin{proof}  Suppose $t$ is a natural number such that $2^t>m$ and 
consider the group $G$ as in Theorem \ref{NAP-Dihedrals}. \end{proof}

                           
\subsection{Wreath products} 
Before stating our results in this subsection, we need to introduce some
additional notation. Let $K$ and $H$ be groups and suppose $H$ is a permutation group of degree $n$, that is $H \leqslant {\Bbb S}_n$.
Then, we will denote the wreath product of $K$ by $H$ by $K \wr  H$, where $H$ acts on the base group $$B(K, n):=\underbrace{K\times K\times \cdots \times K}_{n-{\rm times}},$$ by permuting the components. In the case that $K$ has even order, we will call a diagonal involution in  $B(K, n)$   a {\em constant diagonal involution} if it has the form $(t, t, \ldots, t)$, where $t\in {\rm Inv}(K)$.  Let ${\rm Di}_{\rm nc}(K \wr  H)$ denote the set of nonconstant diagonal involutions, and ${\rm Dm}_{\rm nc}(K \wr  H)$ denote the set of mates to the  nonconstant diagonal involutions.

In what follows, we shall focus our attention on the case $K=D_{2k}$ and $H\leqslant {\Bbb S}_n$  a permutation group of degree $n$, and consider
the wreath product $G=D_{2k}\wr H$.
Note that  $B(D_{2k}, n)$  has $k$ constant diagonal involutions.

\begin{theorem}\label{wreath}  Suppose that $G=D_{2k}\wr   \langle (12\dots p)\rangle$, where $k$ is an odd positive integer and $p$ an odd prime, which satisfy $(k+1)^p+k-2k^p<0$. Then $G$ has no abelian partition.
\end{theorem}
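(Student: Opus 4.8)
The plan is to mimic the counting argument used in the proof of Theorem \ref{NAP-Dihedrals}, but adapted to the wreath product $G = D_{2k} \wr \langle (12\cdots p)\rangle$. The essential idea there was: if a group contains a noncommuting set of diagonal involutions that is strictly larger than the set of all available ``mates'' (elements commuting with at least one diagonal involution but not themselves diagonal involutions), then in any putative abelian partition too many parts would be forced to be singletons, contradicting the requirement $|A_i| \geqslant 2$. I would isolate the same dichotomy here, using the nonconstant diagonal involutions $\mathrm{Di}_{\rm nc}(G)$ as the large noncommuting set and $\mathrm{Dm}_{\rm nc}(G)$ as the pool of mates.

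First I would verify that the nonconstant diagonal involutions form a noncommuting set. An element of the base group $B(D_{2k},p)$ is a diagonal involution when each coordinate is an involution of $D_{2k}$; two such elements $x = (x_1,\dots,x_p)$ and $y=(y_1,\dots,y_p)$ commute in the base group iff they commute coordinatewise, and two distinct involutions in $D_{2k}$ (with $k$ odd) never commute. Hence distinct diagonal involutions of the base group form a noncommuting set, and in particular so do the nonconstant ones. Next I would count the two sets. The number of diagonal involutions of the base group is $k^p$, of which exactly $k$ are constant, so $|\mathrm{Di}_{\rm nc}(G)| = k^p - k$. For the mates, I would compute $|C_G(x)|$ for a diagonal involution $x$: since $x$ lies in the base group and each coordinate involution in $D_{2k}$ has centralizer of order $2$ inside $D_{2k}$, the centralizer of a nonconstant diagonal involution within the base group has order $(k+1)^{\,\text{(something)}}$-type product; the key point is that pooling $\bigl(\bigcup_{x\in \mathrm{Di}_{\rm nc}(G)} C_G(x)\bigr)\setminus \mathrm{Di}_{\rm nc}(G)$ and counting carefully yields $|\mathrm{Dm}_{\rm nc}(G)| = (k+1)^p + k - k^p - (\text{count of Di}_{\rm nc})$, so that the hypothesis $(k+1)^p + k - 2k^p < 0$ translates exactly into $|\mathrm{Di}_{\rm nc}(G)| > |\mathrm{Dm}_{\rm nc}(G)|$.

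With that strict inequality established, the contradiction is the same bookkeeping as before: assume $G$ has an abelian partition. Each nonconstant diagonal involution must lie in its own part, since no two of them commute. Every other element of such a part must commute with that diagonal involution, i.e.\ be one of its mates (or a central/shared element), so the total supply of non-involution elements available to ``fill out'' these parts to size at least $2$ is bounded by $|\mathrm{Dm}_{\rm nc}(G)|$. Since $|\mathrm{Di}_{\rm nc}(G)| > |\mathrm{Dm}_{\rm nc}(G)|$, at least one part consisting of a single nonconstant diagonal involution cannot be enlarged, violating $|A_i|\geqslant 2$. Hence $G$ is a NAP-group.

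The main obstacle I anticipate is the exact computation of $|\mathrm{Dm}_{\rm nc}(G)|$, specifically disentangling the contribution of the nonconstant diagonal involutions' centralizers from those of the constant ones and ensuring no mate is double-counted or wrongly attributed to a constant involution. The wreath product's top group $\langle(12\cdots p)\rangle$ permutes coordinates, so I must be careful that the cyclic permutation action does not merge the orbits of constant and nonconstant involutions in a way that changes the count; because $p$ is prime, a nonconstant diagonal involution has a nontrivial coordinate-pattern whose cyclic shifts are all distinct, which keeps the constant and nonconstant cases cleanly separated and is what produces the extra additive ``$+k$'' term in the hypothesis. Verifying this separation rigorously, and confirming that the centralizer in the full wreath product (not merely the base group) contributes no additional commuting elements among the top-group part that would swell $\mathrm{Dm}_{\rm nc}(G)$, is the delicate part of the argument.
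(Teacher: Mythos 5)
Your proposal follows the paper's strategy exactly: count the nonconstant diagonal involutions, count their mates, and force singleton parts under the stated hypothesis. The gap sits precisely at the point you yourself flag as ``the delicate part,'' namely the claim that the centralizer of a nonconstant diagonal involution contributes no elements with nontrivial top-group component, so that ${\rm Dm}_{\rm nc}(G)$ can be computed entirely inside the base group $B = B(D_{2k},p)$. Your heuristic for this -- that since $p$ is prime, the cyclic shifts of a nonconstant coordinate pattern are all distinct -- does not close it, and in fact the claim is false. For $(b,\sigma)$ with $\sigma \neq 1$ to centralize $t=(t_1,\dots,t_p)$ one does \emph{not} need ${}^{\sigma}t = t$; one only needs ${}^{\sigma}t$ to be conjugate to $t$ coordinatewise by $b$, i.e. $b_i\, t_{\sigma^{-1}(i)}\, b_i^{-1}=t_i$ for all $i$. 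Since $k$ is odd, all $k$ reflections of $D_{2k}$ are conjugate (already under rotations), so such $b_i$ always exist: every nonconstant diagonal involution $t$ has elements of $C_G(t)\setminus B$ over every $\sigma\neq 1$. Equivalently, all $k^p$ diagonal involutions form a single conjugacy class of $G$ (indeed of $B$), whence $|C_G(t)| = |G|/k^p = 2^p p$, strictly larger than $|C_B(t)| = 2^p$. A concrete instance in $D_6\wr\langle(123)\rangle\cong {\Bbb S}_3\wr{\Bbb Z}_3$: with $t=((12),(12),(13))$ and $\sigma=(123)$, choosing $b=((23),1,(23))$ gives $(b,\sigma)\in C_G(t)\setminus B$.

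This is fatal to the counting argument rather than a repairable constant. For each $\sigma\neq 1$, every $b$ whose ordered product around the $\sigma$-cycle is trivial or a reflection yields a mate in $C_G(t)\setminus B$ for some nonconstant diagonal $t$, so the mates outside $B$ number on the order of $(p-1)2^{p-1}k^p$; this always dominates $|{\rm Di}_{\rm nc}(G)| = k^p-k$, no matter how large $k$ is, so the corrected inequality $|{\rm Di}_{\rm nc}(G)| > |{\rm Dm}_{\rm nc}(G)|$ never holds and no hypothesis of the shape $(k+1)^p+k-2k^p<0$ can rescue the pigeonhole step. (A secondary slip: your displayed formula $|{\rm Dm}_{\rm nc}(G)| = (k+1)^p + k - k^p - |{\rm Di}_{\rm nc}(G)|$ does not reduce to the translation you assert; the base-group mate count is $(k+1)^p - k^p$.) You should know that the paper's own proof makes exactly the assertion you flagged -- it states that a nonconstant diagonal involution ``commutes only with elements of $P$,'' $P$ a Sylow $2$-subgroup -- without justification, so your blind reconstruction faithfully mirrors the published argument including this defect; but as a proof the step fails, and establishing (or refuting) the theorem would require either a noncommuting set whose members genuinely have all their mates confined to a small set, or an entirely different obstruction.
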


\begin{proof} 
It is clear that $|{\rm Di}_{\rm nc}(G)|=k^p-k$. Also,  a Sylow $2$-subgroup $P$ of $G$ is an elementary abelian and  any nonconstant diagonal involution $t$ in $P$ commutes  only with elements of $P$. Thus ${\rm Dm}_{\rm nc}(G)$ consist of the identity and all the  nondiagonal involutions. Now a direct computation shows that
$$|{\rm Dm}_{\rm nc}(G)|={p\choose0}+{p\choose1}k+{p\choose2}k^2+\dots+{p\choose p-1}k^{p-1}=(k+1)^p-k^p,$$
and the assumption $(k+1)^p+k-2k^p<0$ implies that $|{\rm Dm}_{\rm nc}(G)|< |{\rm Di}_{\rm nc}(G)|$.  If we put each of the nonconstant diagonal involutions in a disjoint commuting subset of $G$, then at least $2k^p-(k+1)^p-k>0$ singleton sets  will be needed. This shows that $G$ has no abelian partition. \end{proof}

Note that Theorem \ref{wreath} gives us many examples of NAP-groups that are not direct products of groups with self-centralizing involutions, and that the involutions do not generate the group.

 We will call a diagonal involution in  $B(D_{2k}, n)$, where $k>n$, a {\em fixed point free diagonal involution} if it has the form $(t_1, t_2, \dots, t_n)$, where $t_i\neq t_j$ whenever $i\neq j$. Let $G=D_{2k}\wr H$, where $H\leqslant {\Bbb S}_n$  is a permutation group of degree $n$. Let ${\rm Di}_{\rm fp}(G)$ denote the set of fixed point free diagonal involutions of $G$, and  ${\rm Dm}_{\rm fp}(G)$ denote the set of mates to the  fixed point free diagonal involutions. Note that  $G$  has $k!/n!$ fixed point free diagonal involutions and that $|{\rm Dm}_{\rm fp}(G)|\leqslant |{\rm Dm}(G)|$, where ${\rm Dm}(G)$ is the set of diagonal mates in  $B(D_{2k}, n)$. 

\begin{lm}\label{lemma-FPF} For any positive integer $n$, there exists an integer $k>n$ such that $$\frac{k!}{n!}>(1+k)^n-k^n.$$\end{lm}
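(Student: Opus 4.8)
The plan is to view both sides of the inequality as functions of $k$ with $n$ held fixed, and to observe that the right-hand side grows only polynomially in $k$ while the left-hand side grows at least exponentially; the existence of a suitable $k$ is then immediate from the fact that an exponential function eventually dominates any fixed polynomial.

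First I would expand the right-hand side using the binomial theorem. Since $(1+k)^n - k^n = \sum_{i=0}^{n-1}\binom{n}{i}k^i$, this is a polynomial in $k$ of degree $n-1$. Using $k^i \leqslant k^{n-1}$ for $0 \leqslant i \leqslant n-1$ (valid once $k \geqslant 1$) together with $\sum_{i=0}^{n-1}\binom{n}{i} = 2^n - 1$, I obtain the crude upper bound
$$(1+k)^n - k^n \leqslant (2^n-1)\,k^{n-1}.$$
Next I would bound the left-hand side from below. Writing $k!/n! = \prod_{j=n+1}^{k} j$ as a product of $k-n$ integer factors, each of which is at least $n+1$, gives
$$\frac{k!}{n!} \geqslant (n+1)^{\,k-n}.$$
Since $n \geqslant 1$ forces $n+1 \geqslant 2$, the quantity $(n+1)^{k-n}$ is genuinely exponential in $k$.

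Combining these two estimates, it suffices to produce a $k > n$ satisfying $(n+1)^{k-n} > (2^n-1)\,k^{n-1}$. With $n$ fixed, the left side is exponential in $k$ with base at least $2$, while the right side is a polynomial in $k$ of the fixed degree $n-1$, so the inequality holds for all sufficiently large $k$; choosing any such $k$ completes the argument. I do not expect any serious obstacle here: the only point requiring a little care is the standard passage from the comparison of growth rates to the assertion that some concrete large $k$ works, which can be made precise either by the elementary limit $(2^n-1)k^{n-1}/(n+1)^{k-n} \to 0$ as $k \to \infty$, or, if an explicit value is desired, by taking $k$ large enough that the exponential estimate beats the polynomial one directly.
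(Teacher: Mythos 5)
Your proof is correct and follows essentially the same route as the paper, which simply observes that $\lim_{k\to\infty} \frac{k!}{n!\left((1+k)^n-k^n\right)}=\infty$; your explicit bounds $(1+k)^n-k^n\leqslant (2^n-1)k^{n-1}$ and $k!/n!\geqslant (n+1)^{k-n}$ just make that growth-rate comparison concrete. No issues.
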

The proof is immediate because  $$\lim_{k\to\infty}\frac{k!}{n!((1+k)^n-k^n)}=\infty. $$ 
Given an integer $n>0$, we will denote by $\gamma{(n)}$ the {\em smallest} positive integer such that $$\frac{\gamma{(n)}!}{n!}>(1+\gamma{(n)})^n-\gamma{(n)}^n.$$  

We now obtain Theorem A from the Introduction.

\begin{theorem}\label{theorem-FPF}  The following conditions hold:
\begin{itemize}
\item[{\rm (1)}] Every finite group is isomorphic to a  subgroup of an AP-group. 
\item[{\rm (2)}] Every finite group is isomorphic to a  subgroup of a NAP-group. 
\end{itemize}
 \end{theorem}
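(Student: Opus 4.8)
The plan is to dispatch the two parts by very different routes. Part (1) is soft and follows from the elementary observation in Section 3 that a group with nontrivial center is an AP-group; part (2) is the substantive half and will be obtained by embedding $G$ into a suitable wreath product $D_{2k}\wr H$ that is forced to be a NAP-group by the fixed-point-free diagonal involution count recorded just above Lemma \ref{lemma-FPF}.

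For part (1), given a finite group $G$ I would set $H=G\times {\Bbb Z}_2$. Since ${\Bbb Z}_2\leqslant Z(H)$ we have $Z(H)\neq 1$, so if $H$ is nonabelian it is an AP-group (its center-cosets give an abelian partition into parts of size $|Z(H)|\geqslant 2$), while if $H$ is abelian it is a nontrivial abelian group and hence an AP-group with $\vartheta_a(H)=1$. In either case $G\cong G\times 1$ is a subgroup of the AP-group $H$, which proves (1).

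For part (2), given a finite group $G$ I would take $n=|G|$ and let $H\leqslant {\Bbb S}_n$ be the image of $G$ under its regular (Cayley) representation, so $H\cong G$. Setting $k=\gamma(n)$ --- in particular $k>n$, which is exactly what is needed for fixed-point-free diagonal involutions to exist --- I would form $K=D_{2k}\wr H$. The complement $\{(1,\dots,1;h): h\in H\}$ shows $H$, and hence $G$, embeds in $K$, so it only remains to prove that $K$ is a NAP-group. The structural input is that ${\rm Di}_{\rm fp}(K)$ is a noncommuting set: as $k$ is odd, every involution of $D_{2k}$ is a reflection and two distinct reflections never commute, so two diagonal involutions commute only if they coincide coordinatewise. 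Hence, in any hypothetical abelian partition of $K$, each fixed-point-free diagonal involution must occupy a distinct part, and the remaining element(s) of that part commute with it without themselves being fixed-point-free diagonal involutions, i.e.\ lie in ${\rm Dm}_{\rm fp}(K)$; since the parts are disjoint and of size at least $2$, distinct parts consume distinct mates, forcing $|{\rm Dm}_{\rm fp}(K)|\geqslant |{\rm Di}_{\rm fp}(K)|$. I would then contradict this with the counting facts assembled above: $|{\rm Dm}_{\rm fp}(K)|\leqslant |{\rm Dm}(K)|=(1+k)^n-k^n$ by the base-group computation behind Theorem \ref{NAP-Dihedrals}, while the defining inequality of $\gamma(n)$ from Lemma \ref{lemma-FPF} makes the number of fixed-point-free diagonal involutions strictly exceed $(1+k)^n-k^n$. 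This strict inequality $|{\rm Di}_{\rm fp}(K)|>|{\rm Dm}_{\rm fp}(K)|$ is the desired contradiction, so $K$ is a NAP-group.

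I expect the main obstacle to be the bound $|{\rm Dm}_{\rm fp}(K)|\leqslant |{\rm Dm}(K)|$, namely controlling every mate of a fixed-point-free diagonal involution by the diagonal mates of the base group $B(D_{2k},n)$ alone. A priori the centralizer in the wreath product of a diagonal involution can also contain elements with nontrivial top component, so one must argue that for $k=\gamma(n)$ these do not enlarge the mate set beyond the base-group count; this is the only place where the strength of the \emph{fixed-point-free} hypothesis and the largeness of $k$ are genuinely needed, and it is the step requiring the most care. If one is willing simply to invoke the bound as recorded in the text, the remainder --- the noncommuting property of ${\rm Di}_{\rm fp}(K)$ together with the disjointness/pigeonhole argument on the parts --- is routine, so the entire weight of the proof rests on legitimizing that mate bound for the full group $K$ rather than for $B(D_{2k},n)$ in isolation.
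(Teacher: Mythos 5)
Your part (1) is correct and is essentially the paper's own argument: the paper quotes Theorem 2.16 of \cite{FH} (that $H\times A$ is an AP-group for $A$ abelian nontrivial), which is exactly the center-coset observation you reprove directly. Your part (2) also follows the paper's route precisely --- Cayley embedding, $K=D_{2\gamma(n)}\wr H$, the fixed-point-free diagonal involutions as a noncommuting set, pigeonhole against a mate count --- and you have correctly located the crux. But the step you flag and then defer to ``the bound as recorded in the text'' is a genuine gap, not a delicate estimate: neither you nor the paper proves $|{\rm Dm}_{\rm fp}(K)|\leqslant|{\rm Dm}(K)|$, and in fact that inequality is \emph{false} once mates are counted where the pigeonhole needs them, namely in all of $K$ rather than in the base group $B=B(D_{2k},n)$. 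The partner of a fixed-point-free diagonal involution inside a part of a hypothetical abelian partition may be any element of $K$ commuting with it, so the argument requires the set of all such elements of $K$ to be smaller than ${\rm Di}_{\rm fp}(K)$; the quantity $(1+k)^n-k^n$ only counts those lying in $B$.

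Here is why the full-group bound fails for \emph{every} odd $k$, so that no choice of $\gamma(n)$ can rescue the count. Since $k$ is odd, all $k$ involutions of $D_{2k}$ are conjugate; hence for any fixed-point-free diagonal involution $x=(t_1,\ldots,t_n;1)$ and any $\sigma\in H$, the equations $g_i\,t_{\sigma^{-1}(i)}\,g_i^{-1}=t_i$ $(1\leqslant i\leqslant n)$ have exactly $2^n$ solutions, i.e.\ $C_K(x)$ meets every coset $B\sigma$. (Concretely, identifying $D_6\cong{\Bbb S}_3$, in $D_6\wr\langle(12)\rangle$ the element $\bigl((23),(23);(12)\bigr)$ centralizes $\bigl((12),(13);{\rm id}\bigr)$.) These outside mates can moreover be counted injectively: fix $\sigma\neq1$ of order $d$ (all its cycles have length $d$ since $H$ acts regularly); among the $2^n$ elements of $C_K(x)\cap B\sigma$, exactly $2^{n-n/d}$ have the property that the product of their coordinates around each cycle of $\sigma$ is a reflection rather than the identity, and each such element centralizes exactly one diagonal involution of $B$, namely $x$ itself. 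So distinct $x$ contribute disjoint mate sets in $B\sigma$, giving $|{\rm Dm}_{\rm fp}(K)|\geqslant 2^{\,n-n/d}\,|{\rm Di}_{\rm fp}(K)|\geqslant 2\,|{\rm Di}_{\rm fp}(K)|$ for every odd $k>n\geqslant 2$. Thus the obstacle you identified is an outright false statement, and closing part (2) requires a genuinely new idea (a different noncommuting set, or a construction in which the relevant centralizers really are confined to the base group, as happens in Theorem \ref{NAP-Dihedrals}, where there is no top group at all). Two smaller slips you inherited from the text: the number of fixed-point-free diagonal involutions is $k!/(k-n)!=k(k-1)\cdots(k-n+1)$, not $k!/n!$, and $\gamma(n)$ must additionally be odd for $D_{2\gamma(n)}$ to have pairwise noncommuting involutions. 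To be fair, the defect is present in the paper's own proof of Theorem \ref{theorem-FPF}(2), which asserts $|{\rm Dm}_{\rm fp}(G)|\leqslant|{\rm Dm}(G)|$ without justification; your instinct about where the entire weight of the argument rests was exactly right, but a proposal resting on that unproved (and unprovable) bound has not proved the theorem.
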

\begin{proof} Let $H$ be a finite group of order $h$ and $A$ be an abelian group. Part (1) follows from the fact that  $H\times A$  is an AP-group (see Theorem 2.16 in \cite{FH}). To prove (2), regarding $H$ as a permutation group 
of degree $h$ (Cayley's Theorem), we can consider the wreath product  $G=D_{2 \gamma{(h)}} \wr H$. Now, we have $${\rm Di}_{\rm fp}(G)=\frac{\gamma{(h)}!}{h!}>(1+\gamma{(h)})^{h}-\gamma{(h)}^{h}=|{\rm Dm}(G)|\geqslant |{\rm Dm}_{\rm fp}(G)|,$$
 which shows that $G$ is a NAP-group. Part (2) is now immediate
\end{proof}

\section{Conclusions and Future Directions}

In this paper, we have shown that both the class of AP-groups and NAP-groups is large. We have also found bounds for the AP-degree of groups and shown cases where those bounds are sharp. This builds on previous work where a classification of groups with AP-degrees of $1$, $2$, $3$, and $4$ was obtained, along with work classifying several families of groups which were (N)AP-groups.  However, the complete classification of AP-groups remains a mystery. 

As stated in the introduction, an interesting question is whether or not all simple groups are AP-groups. To rephrase: are their any NAP-groups that are simple? Also, while we have determined the minimal AP-degree for AC-groups and, conditional on understanding the AP-degree of certain subgroups, we have determined the AP-degree of Frobenius groups, the problem of determining the AP-degree for other families of groups is wide open. This, of course, can be approached from a different perspective: asking, up to isomorphism, for a classification of groups according to AP-degree. To our knowledge, this problem remains untouched for degree $5$ and higher.

\noindent  {\sc  T. Foguel}\\[0.2cm]
{\sc Department of Mathematics and Computer Science, Adelphi University,}\\
{\sc Garden City, NY $11010$, United States of
America}\\[0.1cm]
{\em E-mail address}: {\tt tfoguel@adelphi.edu}\\[0.3cm]
{\sc  J. Hiller}\\[0.2cm]
{\sc Department of Mathematics and Computer Science, Adelphi University,}\\ 
{\sc Garden City, NY $11010$,  United States of
America}\\[0.1cm]
{\em E-mail address}: {\tt johiller@adelphi.edu}\\[0.3cm]
{\sc Mark L. Lewis}\\[0.2cm]
{\sc Department of Mathematical Sciences, Kent State
University,}\\ {\sc  Kent, Ohio $44242$, United States of
America}\\[0.1cm]
{\em E-mail address}: {\tt  lewis@math.kent.edu}\\[0.3cm]
 {\sc  A. R. Moghaddamfar}\\[0.2cm]
{\sc Faculty of Mathematics, K. N. Toosi
University of Technology,
 P. O. Box $16765$--$3381$, Tehran, Iran}\\[0.1cm]
{\em E-mail addresses}:  {\tt
moghadam@kntu.ac.ir}, and {\tt moghadam@ipm.ir}
\end{document}